\newcommand{\C}{\mathbb{C}}
\theoremstyle{definition}
\newtheorem{lem}{Lemma}
\newtheorem{thm}{Theorem}
\newtheorem{prop}{Proposition}
\newtheorem{rem}{Remark}
\newtheorem{eg}{Example}
\title[]{Surfaces in $\mathbb{R}^7$ obtained from harmonic maps in $S^6$}
\author{Pedro Morais}
\address{Centro de Matem\'{a}tica e Aplica\c{c}\~{o}es (CMA-UBI), Universidade da Beira Interior\\Covilh\~{a}, Portugal}
\email{pmorais@ubi.pt}
\author{Rui Pacheco}
\address{Centro de Matem\'{a}tica e Aplica\c{c}\~{o}es (CMA-UBI), Universidade da Beira Interior\\
 Covilh\~{a}, Portugal}
\email{rpacheco@ubi.pt}
\keywords{Harmonic maps, minimal surfaces, parallel mean curvature, pseudo-umbilical surfaces, seven dimensional cross product.}
\subjclass[2010]{53C43,53C42, 53A10,53A07}
\begin{document}

\maketitle

\begin{abstract}
We will investigate the local geometry of the surfaces in the $7$-dimensional Euclidean space associated to harmonic maps from a Riemann surface $\Sigma$ into $S^6$. By applying methods based on the use of harmonic sequences, we will characterize the conformal harmonic immersions $\varphi:\Sigma\to S^6$ whose associated  immersions  $F:\Sigma\to \mathbb{R}^7$
belong to certain remarkable classes of surfaces, namely: minimal surfaces in hyperspheres; surfaces with parallel mean curvature vector field; pseudo-umbilical surfaces;  isotropic surfaces.
\end{abstract}

\section{Introduction}
It is a well-known fact that any non-conformal harmonic map $\varphi$ from a simply-connected Riemann
surface $\Sigma$ into the round $2$-sphere $S^2$
is the Gauss map of a constant Gauss curvature
surface, $F:\Sigma \to \mathbb{R}^3$, and of two parallel constant mean curvature surfaces, $F^\pm = F\pm\varphi:\Sigma \to \mathbb{R}^3$; the surface $F$ integrates the closed $1$-form $\omega=\varphi\times *d\varphi$, where $\times$ denotes the standard cross product of $\mathbb{R}^3$. The immersion $F$ can also be obtained from  the associated family of $\varphi$ by applying the famous Sym-Bobenko's formula \cite{He,Sym}.

Eschenburg and Quast \cite{EQ} replaced $S^2$
by an arbitrary K\"{a}hler symmetric space $N = G/K$ of compact type
and applied
a natural generalization of Sym-Bobenko's formula \cite{He} to the associated family of a harmonic
map $\varphi:\Sigma \to N$ in order to obtain an immersion $F$ of $\Sigma$ in the Lie algebra $\mathfrak{g}$  of $G$. This construction was subsequently generalized to primitive harmonic maps from $\Sigma$ to generalized flag manifolds \cite{RP}. Surfaces associated to harmonic maps into complex projective spaces have also been constructed in \cite{GSZ,GZ,Za}.

In the present paper, we will investigate the local geometry of surfaces in $\mathbb{R}^7$ associated to harmonic maps from a Riemann surface $\Sigma$ into the nearly K\"{a}hler $6$-sphere $S^6$. In this setting, the harmonicity of a smooth map $\varphi:\Sigma\to S^6$ amounts to the closeness of the differential $1$-form $\omega=\varphi\times *d\varphi$, where $\times$ stands now for the  $7$-dimensional cross product. This means that we can integrate on simply-connected domains in order to obtain a map $F:\Sigma\to \mathbb{R}^7$. When $\varphi$ is a conformal harmonic immersion, $F$ is a conformal immersion; and, in contrast with the $3$-dimensional case, where $F$ is necessarily a totally umbilical surface,  $F$ can exhibit a wide variety of geometrical behaviors in the $7$-dimensional case.
By applying methods based on the use of harmonic sequences \cite{BVW,BW1,BW,DZ,EW, Wolf}, we will characterize the conformal harmonic immersions $\varphi:\Sigma\to S^6$ whose associated  immersions  $F:\Sigma\to \mathbb{R}^7$
belong to certain remarkable classes of surfaces, namely: minimal surfaces in hyperspheres (Theorem \ref{minimal}); surfaces with parallel mean curvature vector field (Theorem \ref{pmcv});  pseudo-umbilical surfaces (Theorem \ref{psnp});  isotropic surfaces (Theorem \ref{isotropic}). From our results, it is interesting to observe that $SO(7)$-congruent harmonic maps into $S^6$ may produce surfaces in different classes of surfaces. For instance,  if $\varphi$ is superconformal in $S^6\cap W$ for some  $4$-plane $W$, then, by Theorem \ref{pmcv}, up to translation, $F$ will be a constant mean curvature surface in $W^\perp$ if $W$ is coassociative; but, if the $4$-plane $W$ admits a \emph{$\times$-compatible decomposition}, then, by Theorem \ref{psnp}, $F$ will be pseudo-umbilical with non-parallel mean curvature vector.

Theorem \ref{minimal} says, in particular, that, if $F$ is minimal in a hypersphere of $\mathbb{R}^7$, then, up to change of orientation of $\Sigma$, $\varphi$ is $SO(7)$-congruent to some almost complex curve. Almost complex curves can be characterized as those weakly conformal harmonic maps from $\Sigma$ in $S^6$ with K\"{a}hler angle $\theta=0$ and were classified in \cite{BVW}. If $\varphi$ is an almost complex curve, $F^+=F+\varphi$ is a constant map and $F^-=F-\varphi$ is just a dilation of $\varphi$.
Apart from almost complex curves, totally real minimal immersions are the most investigated minimal immersions with constant  K\"{a}hler angle  \cite{BVW2}.  In Theorem \ref{toreal} we will identify those totally real harmonic immersions $\varphi:\Sigma \to S^6$ with respect to which both $F^+$ and $F^-$ are immersions with parallel mean curvature vector.

% ; they used the standard
%embedding to identify N with a certain adjoint orbit in ; by , they obtained immersions F, f+ and f− of M in g and studied some of their
%properties. In this case, the harmonic map ϕ is not the usual Grassmannian-valued Gauss
%map but just a distinguished normal vector field of F and f±.

\textbf{Acknowledgements.} This work was  supported by CMA-UBI through the project
UID/MAT/00212/2013.

\section{Preliminaries}

\subsection{The seven dimensional cross product}
Let $\cdot$ be  the standard inner product  on $\mathbb{R}^7$ and $\mathbf{e}_1,\ldots,\mathbf{e}_7$ be the canonical basis of $\mathbb{R}^7$.  Fix the $7$-dimensional cross product $\times$
defined by the multiplication table
\begin{equation}\label{table}
\begin{tabular}{|c|c|c|c|c|c|c|c|}
  \hline
  % after \\: \hline or \cline{col1-col2} \cline{col3-col4} ...
  $\times$ & $\mathbf{e}_1$ &  $\mathbf{e}_2$  &  $\mathbf{e}_3$  &  $\mathbf{e}_4$  &  $\mathbf{e}_5$  &  $\mathbf{e}_6$  &  $\mathbf{e}_7$ \\ \hline
   $\mathbf{e}_1$ & 0 &$\mathbf{e}_3$& $-\mathbf{e}_2$ & $\mathbf{e}_5$ & $-\mathbf{e}_4$ & $-\mathbf{e}_7$& $\mathbf{e}_6$  \\ \hline
   $\mathbf{e}_2$  & $-\mathbf{e}_3$ & 0 & $\mathbf{e}_1$ &$\mathbf{e}_6$ & $\mathbf{e}_7$ & $-\mathbf{e}_4$ & $-\mathbf{e}_5$ \\ \hline
   $\mathbf{e}_3$  & $\mathbf{e}_2$ &$-\mathbf{e}_{1}$ & 0 & $\mathbf{e}_7$ & $-\mathbf{e}_{6}$ & $\mathbf{e}_5$ & $-\mathbf{e}_{4}$\\ \hline
 $\mathbf{e}_4$  &$-\mathbf{e}_5$ &$-\mathbf{e}_6$ & $-\mathbf{e}_7$ & 0 & $\mathbf{e}_1$ & $\mathbf{e}_2$ & $\mathbf{e}_3$ \\ \hline
   $\mathbf{e}_5$  &  $\mathbf{e}_4$ &  $-\mathbf{e}_7$ &  $\mathbf{e}_6$ &   $-\mathbf{e}_1$ & 0 &  $-\mathbf{e}_3$ &  $\mathbf{e}_2$ \\ \hline
   $\mathbf{e}_6$ &  $\mathbf{e}_7$ &  $\mathbf{e}_4$ &  $-\mathbf{e}_5$ &  $-\mathbf{e}_2$ &  $\mathbf{e}_3$ & 0 &  $-\mathbf{e}_1$ \\ \hline
   $\mathbf{e}_7$  &  $-\mathbf{e}_6$ &  $\mathbf{e}_5$&  $\mathbf{e}_4$ &  $-\mathbf{e}_3$ & $-\mathbf{e}_2$ &  $\mathbf{e}_1$ &  $0$ \\
  \hline
\end{tabular}
\end{equation}
The cross product $\times$ satisfies the following identities, for all $\mathbf{x},\mathbf{y}\in\mathbb{R}^7$:
\begin{enumerate}
  \item[(P1)] $\mathbf{x}\cdot (\mathbf{x}\times \mathbf{y})=(\mathbf{x}\times \mathbf{y})\cdot \mathbf{y}=0$;
  \item[(P2)] $(\mathbf{x}\times \mathbf{y})\cdot(\mathbf{x}\times \mathbf{y})=(\mathbf{x}\cdot\mathbf{x})(\mathbf{y}\cdot \mathbf{y})-(\mathbf{x}\cdot \mathbf{y})^2$;
   \item[(P3)] $\mathbf{x}\times \mathbf{y}=-\mathbf{y}\times \mathbf{x}$;
  \item[(P4)] $\mathbf{x}\cdot(\mathbf{y}\times \mathbf{z})=\mathbf{y}\cdot (\mathbf{z}\times \mathbf{x})=\mathbf{z}\cdot(\mathbf{x}\times \mathbf{y})$;
  \item[(P5)] $(\mathbf{x}\times \mathbf{y})\times (\mathbf{x}\times \mathbf{z})=((\mathbf{x}\times \mathbf{y})\times \mathbf{z})\times \mathbf{x}+((\mathbf{y}\times \mathbf{z})\times \mathbf{x})\times \mathbf{x}+ ((\mathbf{z}\times \mathbf{x})\times \mathbf{x})\times \mathbf{y}$;
  \item[(P6)] $\mathbf{x}\times (\mathbf{x}\times \mathbf{y})=-(\mathbf{x}\cdot \mathbf{x})\mathbf{y}+(\mathbf{x}\cdot \mathbf{y})\mathbf{x}$;
  \item[(P7)] $\mathbf{x}\times (\mathbf{y}\times \mathbf{z})+(\mathbf{x}\times \mathbf{y})\times \mathbf{z}=2(\mathbf{x}\cdot \mathbf{z})\mathbf{y}-(\mathbf{x}\cdot \mathbf{y})\mathbf{z}-(\mathbf{y}\cdot \mathbf{z})\mathbf{x}$.
\end{enumerate}

Extend the inner product $\cdot$ and the cross product $\times$  by complex bilinearity to $\mathbb{C}^7=\mathbb{R}^7\otimes {\mathbb{C}}$. We also denote these complex bilinear extensions by $\cdot$ and $\times$, respectively.  Later on we will need the following lemma.
\begin{lem}\label{lemax}
   Let $\mathbf{x}$ and $\mathbf{y}$ be two vectors in $\mathbb{C}^7$  and suppose that $\mathbf{y}\neq 0$ is isotropic. If  $\mathbf{x}\times\mathbf{y} =0$ then $\mathbf{x}$ is isotropic and $\mathbf{x}\cdot \mathbf{y}=0$.
\end{lem}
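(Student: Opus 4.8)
The plan is to extract both conclusions from identity (P6) alone, used twice. First I would observe that the relations (P1)--(P7) are identities between polynomial maps in the coordinates of the vectors involved, so they remain valid verbatim after the complex-bilinear extension of $\cdot$ and $\times$ to $\mathbb{C}^7$; in particular (P6) holds for all $\mathbf{x},\mathbf{y}\in\mathbb{C}^7$, and the hypothesis that $\mathbf{y}$ is isotropic means $\mathbf{y}\cdot\mathbf{y}=0$ with respect to this bilinear (not Hermitian) extension.

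The first step is to obtain $\mathbf{x}\cdot\mathbf{y}=0$. Since $\mathbf{x}\times\mathbf{y}=0$, property (P3) gives $\mathbf{y}\times\mathbf{x}=0$ as well. Applying (P6) with the roles of $\mathbf{x}$ and $\mathbf{y}$ interchanged,
\[
0=\mathbf{y}\times(\mathbf{y}\times\mathbf{x})=-(\mathbf{y}\cdot\mathbf{y})\,\mathbf{x}+(\mathbf{y}\cdot\mathbf{x})\,\mathbf{y}=(\mathbf{x}\cdot\mathbf{y})\,\mathbf{y},
\]
where we used $\mathbf{y}\cdot\mathbf{y}=0$. Because $\mathbf{y}\neq 0$, this forces $\mathbf{x}\cdot\mathbf{y}=0$. (Alternatively, this first step follows at once from (P2): with $\mathbf{x}\times\mathbf{y}=0$ and $\mathbf{y}\cdot\mathbf{y}=0$ it reads $0=-(\mathbf{x}\cdot\mathbf{y})^2$.)

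The second step is then immediate: applying (P6) in its original form and using $\mathbf{x}\times\mathbf{y}=0$,
\[
0=\mathbf{x}\times(\mathbf{x}\times\mathbf{y})=-(\mathbf{x}\cdot\mathbf{x})\,\mathbf{y}+(\mathbf{x}\cdot\mathbf{y})\,\mathbf{x}=-(\mathbf{x}\cdot\mathbf{x})\,\mathbf{y},
\]
the last equality using the conclusion $\mathbf{x}\cdot\mathbf{y}=0$ of the first step. Once more $\mathbf{y}\neq 0$ gives $\mathbf{x}\cdot\mathbf{x}=0$, i.e. $\mathbf{x}$ is isotropic.

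I do not expect any genuine obstacle: the argument is a two-line computation. The only point deserving comment is the passage from the real identities (P1)--(P7) to their complex-bilinear versions, which is automatic, together with the (trivial) remark that a nonzero vector cannot be annihilated by a nonzero scalar, so that $\lambda\,\mathbf{y}=0$ with $\mathbf{y}\neq 0$ yields $\lambda=0$ in both displays above.
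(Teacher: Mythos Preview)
Your proof is correct and uses the same key identity (P6) as the paper. The only organizational difference is that the paper handles the collinear case separately and then, assuming $\mathbf{x}$ and $\mathbf{y}$ are linearly independent, reads off both coefficients at once from $-(\mathbf{x}\cdot\mathbf{x})\mathbf{y}+(\mathbf{x}\cdot\mathbf{y})\mathbf{x}=0$; your two applications of (P6) (first with the roles swapped to get $\mathbf{x}\cdot\mathbf{y}=0$, then in the original order) avoid that case split entirely, which is a small but genuine streamlining.
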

\begin{proof}
 If $\mathbf{x}$ and $\mathbf{y}$ are collinear, it is clear that $\mathbf{x}$ is also isotropic and $\mathbf{x}\cdot \mathbf{y}=0$. Otherwise,
  if  $\mathbf{x}\times\mathbf{y} =0$, by (P6) we have
  $0=\mathbf{x}\times(\mathbf{x}\times\mathbf{y})=-(\mathbf{x}\cdot \mathbf{x})\mathbf{y}+(\mathbf{x}\cdot \mathbf{y})\mathbf{x},$
  which implies that $\mathbf{x}\cdot \mathbf{x}=0$ (that is, $\mathbf{x}$ is isotropic) and $\mathbf{x}\cdot \mathbf{y}=0$.
\end{proof}

Let  $V\subset \mathbb{R}^7$  be a $3$-dimensional subspace and  set $W:=V^\perp$. The subspace  $V$ is an \emph{associative $3$-plane} if it is closed under the cross product, and
$W$ is a \emph{coassociative $4$-plane} if $V$ is an associative $3$-plane.
The exceptional Lie group $G_2$, which is precisely the group of  isometries  in $SO(7)$ preserving the vector cross product, acts transitively
 on the Grassmannian $G_3^a(\mathbb{R}^7)$ of associative 3-dimensional subspaces of $\mathbb{R}^7$, with isotropy group isomorphic to $SO(4)$ (see \cite{HL} for details).
\begin{lem}\label{vxv}
Let $V$ be a $3$-dimensional subspace of $\mathbb{R}^7$ and set $W:=V^\perp$. Then:
 \begin{enumerate}
   \item[1)] if $V$ is an associative $3$-plane, $V\times W= W$ and $W\times W= V$;
   \item[2)] if $V$ is an associative $3$-plane, for any orthogonal direct sum decomposition
   \begin{equation}\label{W}
      W=W_1\oplus W_2,\,\,\mbox{with $\dim W_1=\dim W_2=2$},
   \end{equation}
   we have $W_1\times W_1=W_2\times W_2$;
\item[3)] conversely, if $W$ admits  an orthogonal direct sum decomposition
\eqref{W}  satisfying $W_1\times W_1=W_2\times W_2$, then $V$ is an  associative $3$-plane;
    \end{enumerate}
\end{lem}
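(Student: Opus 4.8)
The plan is to obtain statements 1) and 2) after moving to a standard model, and to prove the converse 3) by a direct computation with the identities (P1)--(P7), together with the polarization $(\mathbf{x}\times\mathbf{y})\cdot(\mathbf{x}\times\mathbf{z})=(\mathbf{x}\cdot\mathbf{x})(\mathbf{y}\cdot\mathbf{z})-(\mathbf{x}\cdot\mathbf{y})(\mathbf{x}\cdot\mathbf{z})$ of (P2) (obtained by replacing $\mathbf{y}$ with $\mathbf{y}+\mathbf{z}$ in (P2)). Since $G_2\subset SO(7)$ preserves $\cdot$ and $\times$, acts transitively on $G_3^a(\mathbb{R}^7)$, and satisfies $g(V^\perp)=(gV)^\perp$, for 1) and 2) we may assume $V=V_0:=\langle\mathbf{e}_1,\mathbf{e}_2,\mathbf{e}_3\rangle$ and hence $W=W_0:=\langle\mathbf{e}_4,\mathbf{e}_5,\mathbf{e}_6,\mathbf{e}_7\rangle$. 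Then 1) is read off the table \eqref{table}: the products $\mathbf{e}_i\times\mathbf{e}_j$ with $i\in\{1,2,3\}$, $j\in\{4,5,6,7\}$ span $W_0$, and those with $i,j\in\{4,5,6,7\}$ span $V_0$. (A coordinate-free variant: $V\times W\subseteq W$ follows from (P4) and the associativity of $V$; $v\times(\cdot)\colon W\to W$ is a linear isometry for each unit $v\in V$ by (P2), hence onto; and $W\times W\subseteq V$ amounts to the vanishing on $W$ of the totally antisymmetric trilinear form $(w_1,w_2,w_3)\mapsto(w_1\times w_2)\cdot w_3$, after which $w\times(\cdot)\colon W\cap w^\perp\to V$ is again an isometry onto a $3$-space.)

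For 2), inspection of \eqref{table} shows that the $2$-forms on $W_0$ defined by $\omega_i(X,Y):=(X\times Y)\cdot\mathbf{e}_i$ are $\omega_1=e^4\wedge e^5-e^6\wedge e^7$, $\omega_2=e^4\wedge e^6+e^5\wedge e^7$, $\omega_3=e^4\wedge e^7-e^5\wedge e^6$, each anti-self-dual with respect to the standard orientation of $W_0$. Given a decomposition $W_0=W_1\oplus W_2$ as in \eqref{W}, choose orthonormal bases $\{a,b\}$ of $W_1$ and $\{c,d\}$ of $W_2$ ordered so that $(a,b,c,d)$ is positively oriented; then anti-self-duality of $\omega_i$ gives $\omega_i(a,b)=-\omega_i(c,d)$, i.e.\ $(a\times b+c\times d)\cdot\mathbf{e}_i=0$, for $i=1,2,3$. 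By 1), $a\times b+c\times d\in V_0$, so $a\times b=-(c\times d)$ and hence $W_1\times W_1=\langle a\times b\rangle=\langle c\times d\rangle=W_2\times W_2$.

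For the converse 3), let $\{a,b\}$ and $\{c,d\}$ be orthonormal bases of $W_1$ and $W_2$; then $W_1\times W_1=\langle a\times b\rangle$ and $W_2\times W_2=\langle c\times d\rangle$ are the same line, so after possibly interchanging $c$ and $d$ we may set $n:=a\times b=-(c\times d)$. First I would show that $\{n,\,a\times c,\,a\times d\}$ is an orthonormal subset of $V:=W^\perp$: orthonormality comes from (P2) and its polarization, and membership in $\langle a,b,c,d\rangle^\perp$ from (P1), the cyclic identity (P4) and the relation $a\times b=-(c\times d)$; as $\dim V=3$, this triple is then an orthonormal basis of $V$. Next, to see that $V$ is closed under $\times$ it suffices to evaluate $n\times(a\times c)$, $n\times(a\times d)$ and $(a\times c)\times(a\times d)$: applying (P7) with $\mathbf{y}=a$, with $\mathbf{x}\in\{a\times b,\,a\times c\}$ and $\mathbf{z}\in\{c,d\}$ (the scalar terms on its right-hand side all vanish since $a\perp c,d$ and $n\cdot c=n\cdot d=0$), and using $(a\times b)\times a=b$ and $(a\times c)\times a=c$ (both from (P3) and (P6)), one gets $n\times(a\times c)=-(b\times c)$, $n\times(a\times d)=-(b\times d)$, $(a\times c)\times(a\times d)=-(c\times d)=n$; and $b\times c,\,b\times d\in V$ by the same argument as for $a\times c,\,a\times d$. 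Hence $V\times V\subseteq V$, i.e.\ $V$ is an associative $3$-plane.

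The bookkeeping in 3) is, I expect, the only real difficulty: one has to hit upon the right candidate basis $\{a\times b,\,a\times c,\,a\times d\}$ for $V$ and then track simultaneously, using (P1), (P4), (P6), (P7) and the polarized (P2), that this triple is orthonormal, that it lies in $W^\perp$, and that the auxiliary products $b\times c$ and $b\times d$ produced when closing under $\times$ also lie in $W^\perp$. No single verification is hard, but there are several and one must be careful with signs. In part 1), the one point that is not purely formal is the inclusion $W\times W\subseteq V$, which is precisely why reducing to the standard model via $G_2$ is convenient there.
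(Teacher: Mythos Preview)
Your argument is correct and follows the same outline as the paper's: reduce 1) and 2) to the standard model via the transitive $G_2$-action, and for 3) build the candidate basis $\{a\times b,\,a\times c,\,a\times d\}$ of $V$ and verify closure under $\times$ using (P1), (P4), (P6), (P7). The paper uses the sign convention $\mathbf{v}_1\times\mathbf{v}_2=\mathbf{v}_3\times\mathbf{v}_4$ rather than your $a\times b=-(c\times d)$, but this is cosmetic; your detailed bookkeeping is exactly what the paper leaves to ``apply the properties of $\times$''.

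The one genuine difference is your treatment of 2). The paper either invokes direct computation or, implicitly, relies on the fact that the isotropy $SO(4)\subset G_2$ acts transitively on $2$-planes in $W_0$, so that it suffices to check a single decomposition. Your observation that the three $2$-forms $\omega_i=(\,\cdot\,\times\,\cdot\,)\cdot\mathbf{e}_i$ on $W_0$ are anti-self-dual handles all decompositions simultaneously and explains conceptually why $W_1\times W_1=W_2\times W_2$: anti-self-duality is exactly the statement $\omega_i(a,b)=-\omega_i(c,d)$ for any positively oriented orthonormal frame $(a,b,c,d)$. This is a slightly more illuminating route than the paper's, at no extra cost.
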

\begin{proof}
  The first two statements can be proved by direct application of the properties of $\times$. Alternatively, one can use the multiplication table \eqref{table} in order to check the statements for a suitable associative $3$-plane, and then apply the transitivity of the $G_2$-action.

  Given a $4$-plane $W$ in the conditions of the third statement, fix an orthonormal basis $\mathbf{v}_1,\mathbf{v}_2,\mathbf{v}_3,\mathbf{v}_4$, with $W_1=\mathrm{span}\{\mathbf{v}_1,\mathbf{v}_2\}$ and
  $W_2=\mathrm{span}\{\mathbf{v}_3,\mathbf{v}_4\}$, such that
  $\mathbf{v}_5:=\mathbf{v}_1\times\mathbf{v}_2=\mathbf{v}_3\times\mathbf{v}_4$.
 By (P1), it is clear that $\mathbf{v}_5\in W^\perp$. Define also $\mathbf{v}_6:=\mathbf{v}_1\times\mathbf{v}_3$ and    $\mathbf{v}_7:=\mathbf{v}_1\times\mathbf{v}_4$.
  One can now apply the properties of $\times$ in order to show that $V=\mathrm{span}\{\mathbf{v}_5,\mathbf{v}_6,\mathbf{v}_7\}$ is an associative $3$-plane.
\end{proof}
%$G_2$ also acts transitively
%on the six-dimensional unit sphere $S^6$. In this case, the isotropy group is isomorphic to $SU(3)$. However,   $S^6$ is not a symmetric $G_2$-space since $SU(3)$ is not a symmetric subgroup of $G_2$.
%We can extend the notion of angle between two oriented $2$-planes of $\mathbb{R}^7$ as follows. Let $V$ and $\tilde V$ be two oriented planes of $\mathbb{R}^7$ and choose oriented orthonormal basis
%$e_1,e_2$ and $\tilde{e}_1,\tilde{e}_2$ of $V$ and $\tilde V$, respectively. The angle $\theta$ between the planes  $V$ and $\tilde V$ is defined by $$\cos \theta=(e_1\times e_2)\cdot (\tilde e_1\times \tilde e_2).$$

Given a $4$-dimensional subspace of $\mathbb{R}^7$, an orthogonal direct sum decomposition of the form \eqref{W} is said to be $\times$-\emph{compatible} if $W_1\times W_1\perp W_2\times W_2$.

\begin{lem}\label{all}
  If the $4$-dimensional subspace $W$ admits one $\times$-compatible decomposition, then any decomposition of $W$ of the form \eqref{W} is $\times$-compatible.
\end{lem}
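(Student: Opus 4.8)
The plan is to reduce $\times$-compatibility to the vanishing, on the $4$-plane $W$ itself, of a fixed alternating $4$-form; since an alternating form evaluated on an orthonormal basis of $W$ changes only by the sign of the determinant of the change of basis, this makes $\times$-compatibility intrinsic to $W$. For $\mathbf{x},\mathbf{y},\mathbf{u},\mathbf{v}\in\mathbb{R}^7$ set
\[
  \Psi(\mathbf{x},\mathbf{y},\mathbf{u},\mathbf{v}):=(\mathbf{x}\cdot\mathbf{u})(\mathbf{y}\cdot\mathbf{v})-(\mathbf{x}\cdot\mathbf{v})(\mathbf{y}\cdot\mathbf{u})-(\mathbf{x}\times\mathbf{y})\cdot(\mathbf{u}\times\mathbf{v}).
\]
If $W_i$ is a $2$-plane with orthonormal basis $\{\mathbf{u},\mathbf{v}\}$, then $W_i\times W_i=\mathbb{R}(\mathbf{u}\times\mathbf{v})$ and $\mathbf{u}\times\mathbf{v}$ is a unit vector by (P2). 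Hence, for a decomposition $W=W_1\oplus W_2$ as in \eqref{W} with orthonormal bases $\{\mathbf{v}_1,\mathbf{v}_2\}$ of $W_1$ and $\{\mathbf{v}_3,\mathbf{v}_4\}$ of $W_2$, the decomposition is $\times$-compatible exactly when $(\mathbf{v}_1\times\mathbf{v}_2)\cdot(\mathbf{v}_3\times\mathbf{v}_4)=0$; and since the first two terms of $\Psi$ vanish on an orthonormal system, this is the same as $\Psi(\mathbf{v}_1,\mathbf{v}_2,\mathbf{v}_3,\mathbf{v}_4)=0$.

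The core step is to check that $\Psi$ is an alternating $4$-form. It is clearly multilinear; by (P3) it is skew under $\mathbf{x}\leftrightarrow\mathbf{y}$ and under $\mathbf{u}\leftrightarrow\mathbf{v}$, and it is symmetric under interchanging the pair $(\mathbf{x},\mathbf{y})$ with $(\mathbf{u},\mathbf{v})$. A multilinear form is alternating as soon as it vanishes whenever two of its arguments coincide, and by the symmetries just listed every such configuration reduces to the case $\mathbf{u}=\mathbf{x}$. There I would use (P4), then (P3) and (P6):
\[
  (\mathbf{x}\times\mathbf{y})\cdot(\mathbf{x}\times\mathbf{v})=\mathbf{v}\cdot\big((\mathbf{x}\times\mathbf{y})\times\mathbf{x}\big)=-\mathbf{v}\cdot\big(\mathbf{x}\times(\mathbf{x}\times\mathbf{y})\big)=(\mathbf{x}\cdot\mathbf{x})(\mathbf{y}\cdot\mathbf{v})-(\mathbf{x}\cdot\mathbf{y})(\mathbf{x}\cdot\mathbf{v}),
\]
so $\Psi(\mathbf{x},\mathbf{y},\mathbf{x},\mathbf{v})=0$, as required.

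The conclusion is then immediate. Suppose $W=W_1\oplus W_2$ is a $\times$-compatible decomposition, giving an orthonormal basis $\mathbf{v}_1,\dots,\mathbf{v}_4$ of $W$ with $\Psi(\mathbf{v}_1,\mathbf{v}_2,\mathbf{v}_3,\mathbf{v}_4)=0$ by the first paragraph. Any other decomposition of $W$ of the form \eqref{W} yields another orthonormal basis $\mathbf{v}_1',\dots,\mathbf{v}_4'$ of $W$, related to the first by some $A\in O(4)$, whence $\Psi(\mathbf{v}_1',\mathbf{v}_2',\mathbf{v}_3',\mathbf{v}_4')=\det(A)\,\Psi(\mathbf{v}_1,\mathbf{v}_2,\mathbf{v}_3,\mathbf{v}_4)=0$ because $\Psi$ is alternating; hence this second decomposition is $\times$-compatible as well.

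The only genuinely computational ingredient is the identity $(\mathbf{x}\times\mathbf{y})\cdot(\mathbf{x}\times\mathbf{v})=(\mathbf{x}\cdot\mathbf{x})(\mathbf{y}\cdot\mathbf{v})-(\mathbf{x}\cdot\mathbf{y})(\mathbf{x}\cdot\mathbf{v})$ used to kill the diagonal terms of $\Psi$; everything else is bookkeeping. So the "main obstacle" is really the conceptual one of spotting the right corrective $4$-form $\Psi$ — essentially the coassociative calibration of the $G_2$-structure — and recognizing that $\times$-compatibility of a splitting \eqref{W} is precisely the vanishing of $\Psi$ on $W$; once that is in hand, the invariance of alternating forms under orthonormal changes of basis closes the argument.
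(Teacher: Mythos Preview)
Your proof is correct and, at its core, is the same argument as the paper's: both show that for an orthonormal quadruple $\mathbf{v}_1,\dots,\mathbf{v}_4$ the quantity $(\mathbf{v}_1\times\mathbf{v}_2)\cdot(\mathbf{v}_3\times\mathbf{v}_4)$ only changes sign under permutations, and then pass to an arbitrary orthonormal basis of $W$ by linearity (equivalently, by the determinant). The paper compresses the permutation step into the phrase ``by applying the properties of the cross product'' and then invokes ``linearity''; you make that step precise by introducing the multilinear form $\Psi(\mathbf{x},\mathbf{y},\mathbf{u},\mathbf{v})=(\mathbf{x}\cdot\mathbf{u})(\mathbf{y}\cdot\mathbf{v})-(\mathbf{x}\cdot\mathbf{v})(\mathbf{y}\cdot\mathbf{u})-(\mathbf{x}\times\mathbf{y})\cdot(\mathbf{u}\times\mathbf{v})$ and proving it is alternating on all of $\mathbb{R}^7$, so that $\times$-compatibility becomes the vanishing of $\Psi|_W$. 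What your packaging buys is transparency and context: the identity $(\mathbf{x}\times\mathbf{y})\cdot(\mathbf{x}\times\mathbf{v})=(\mathbf{x}\cdot\mathbf{x})(\mathbf{y}\cdot\mathbf{v})-(\mathbf{x}\cdot\mathbf{y})(\mathbf{x}\cdot\mathbf{v})$ (from (P4), (P3), (P6)) is isolated as the only computation, and $\Psi$ is recognized as (minus) the coassociative $4$-form of the $G_2$-structure, which also explains why coassociative $4$-planes admit no $\times$-compatible decomposition.
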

\begin{proof}
  Fix an orthonormal basis $\mathbf{v}_1,\mathbf{v}_2,\mathbf{v}_3,\mathbf{v}_4$ of $W$ such that the decomposition $W=W_1\oplus W_2$ is $\times$-compatible, where $W_1=\mathrm{span}\{\mathbf{v}_1,\mathbf{v}_2\}$ and $W_2=\mathrm{span}\{\mathbf{v}_3,\mathbf{v}_4\}$, which means that $(\mathbf{v}_1\times \mathbf{v}_2)\cdot(\mathbf{v}_3\times \mathbf{v}_4)=0$. By applying the properties of the cross product, this implies that $(\mathbf{v}_{\sigma(1)}\times \mathbf{v}_{\sigma(2)})\cdot(\mathbf{v}_{\sigma(3)}\times \mathbf{v}_{\sigma(4)})=0$ for any permutation $\sigma$ of $\{1,2,3,4\}$. The result follows now by linearity.
\end{proof}
For example, any decomposition of $W=\mathrm{span}\{\mathbf{e}_1,\mathbf{e}_2,\mathbf{e}_3,\mathbf{e}_4\}$ of the form \eqref{W} is $\times$-compatible. On the contrary, coassociative $4$-planes do not admit $\times$-compatible decompositions.

\subsection{Harmonic sequences} Let $\Sigma$ be a Riemann surface with local conformal coordinate $z$. We will view any smooth map into  the Grassmannian $G_k(\mathbb{C}^n)$ of $k$-planes in $\mathbb{C}^n$ as a vector subbundle of the trivial bundle $\underline{\mathbb{C}}^n=\Sigma\times\mathbb{C}^n$.
 Given a harmonic map $\varphi:\Sigma\rightarrow  G_k(\mathbb{C}^n)$, let $\{\varphi_i\}_{i\in\mathbb{Z}}$, with $\varphi_0=\varphi$, be the correspondent \emph{harmonic sequence}  \cite{BVW,BW1,BW,DZ,EW, Wolf} of $\varphi$.
Any two consecutive elements in this sequence are orthogonal with respect to the standard Hermitian inner product $h(\mathbf{x},\mathbf{y})=\mathbf{x}\cdot \bar{ \mathbf{y}}$ on $\C^{n}=\mathbb{R}^{n}\otimes\mathbb{C}$, where $\cdot$ is the standard inner product of $\mathbb{R}^{n}$.
The harmonic map $\varphi$ has \emph{isotropy order} $r>0$ if $\varphi\perp_h \varphi_i$ for all $0<i\leq r$ but $\varphi\not\perp_h \varphi_{r+1}$.

In the harmonic sequence of $\varphi$, the harmonic maps $\varphi_1$ and $\varphi_{-1}$ are precisely the images of the
\emph{second fundamental forms} $A'_\varphi$ and $A''_\varphi$, respectively. These are defined by $A'_{\varphi}(v)=P_{\varphi}^{\perp_h}(\frac{\partial v}{\partial z})$ and $A''_{\varphi}(v)=P_{\varphi}^{\perp_h}(\frac{\partial v}{\partial \bar z})$, where $P_{\varphi}^{\perp_h}$ is the Hermitian  projection onto the orthogonal bundle of $\varphi$, for any smooth section $v$ of the vector bundle associated to $\varphi$. The harmonicity of $\varphi$ can be reinterpreted in terms of the holomorphicity of  its  second fundamental forms with
respect to the Koszul-Malgrange holomorphic structures on $\varphi$ and on the orthogonal bundle $\varphi^{\perp_h}$.
This allows one to remove singularities and define $\varphi_1$ and $\varphi_{-1}$ globally on $\Sigma$. The remaining elements of the harmonic sequence are obtained by iterating this construction.

Given a harmonic map $\varphi:\Sigma\rightarrow  \mathbb{C}P^n$ into the complex projective space,  any harmonic map $\varphi_i$ in the harmonic sequence of $\varphi$ has the same isotropy order of $\varphi$. Moreover, $\varphi$ is non-constant weakly conformal  if, and only if, it has isotropy order $r\geq 2$.
If $\varphi$ has finite isotropy order $r$, then it is clear that $r\leq n$. Following \cite{BPW},  we say that $\varphi$ is \emph{superconformal} if  $r=n$.
If $\varphi_{j}=\{0\}$  for some $j>0$ (which implies that $\varphi_{-j'}=\{0\}$  for some $j'>0$), then $\varphi$ has infinite isotropy order. Such harmonic maps are  said to be \emph{isotropic} \cite{EW} or \emph{superminimal} \cite{Bryant}. An isotropic harmonic map $\varphi:\Sigma\rightarrow  \mathbb{C}P^n$ has \emph{length} $l>0$ if its  harmonic sequence has length $l$.

%, that is, the corresponding \emph{harmonic diagram} (in the sense of \cite{BW}) is of the form
%\begin{center}
% \psset{xunit=1.0cm,yunit=1.0cm,algebraic=true,dotstyle=o,dotsize=2pt 0,linewidth=0.6pt,arrowsize=1.5pt 2,arrowinset=0.25}
%\begin{pspicture*}(-4.6,-0.5)(4.6,0.5)
%\rput[tl](-4.4,0.21){$\{0\}$}
%\psline{->}(-3.8,0)(-3.3,0)
%\rput[tl](-3.2,0.14){$\varphi_{-j'+1}$}
%\psline{->}(-2,0)(-1.4,0)
%\rput[tl](-1.25,0){$\ldots$}
%\psline{->}(-0.7,0)(-0.1,0)
%\rput[tl](0,0.14){$\varphi$}
%\psline{->}(0.35,0)(0.95,0)
%\rput[tl](1.1,0){$\ldots$}
%\psline{->}(1.7,0)(2.3,0)
%\rput[tl](2.4,0.14){$\varphi_{j-1}$}
%\psline{->}(3.25,0)(3.85,0)
%\rput[tl](3.95,0.21){$\{0\}$}
%\end{pspicture*}
%\end{center}
%with  $l=j+j'-1$.
%When the harmonic sequence of $\varphi$ has length $n$,   $\varphi$ is said to be \emph{superminimal} \cite{Bryant} or \emph{totally isotropic} \cite{EW}.

Now, consider the totally geodesic immersion of the unit round sphere $S^n$ in $\mathbb{R}P^n\subset \C P^n$. Let $\varphi:\Sigma\to S^n$ be a harmonic map, which can also be seen either as a harmonic map  $\varphi:\Sigma\to  \C P^n$ satisfying $\overline{\varphi}=\varphi$ or
as a parallel section of the corresponding line subbundle. We also have $\overline\varphi_j=\varphi_{-j}$. Moreover,  for each $j$, there exists   a local meromorphic section $f_j$ of $\varphi_j$ such that  \cite{BVW,BW1}:
\begin{align}\label{harmonicsequence1}
  \frac{\partial f_j}{\partial z}=f_{j+1}+\frac{\partial}{\partial z}\log|f_j|^2f_j;\,\,\,
  \frac{\partial f_{j+1}}{\partial \bar z}=-\frac{|f_{j+1}|^2}{|f_j|^2}f_j;\,\,\,
  |f_j||f_{-j}|=1 \,(\mbox{if $f_j\neq 0$}),
\end{align}
with $f_0=\varphi$.  Observe that, since
$|f_1|^2=f_1\cdot \bar f_1= \frac{\partial f_0}{\partial z}\cdot  \frac{\partial f_0}{\partial \bar z}=-\frac{1}{|f_{-1}|^2}f_1\cdot f_{-1},$
we have
\begin{equation}\label{pipi}
  f_1\cdot f_{-1}=-1.
\end{equation}

If $\varphi$ is non-constant weakly conformal, then $\overline{\varphi}_1\perp_h \varphi_1$ ($\varphi$  has isotropy order $r\geq 2$), which means that $f_1\cdot f_1=0$. Differentiating we obtain $f_2\cdot f_1= 0$, that is, $\varphi_2\perp_h \varphi_{-1}$. Hence $\varphi$ has isotropy order $r\geq 3$. This argument can be extended in order to prove the following.
\begin{lem}\label{r=3}
  If the harmonic map  $\varphi:\Sigma \to S^n$ has finite isotropy order $r$, then $r$ is  odd.\end{lem}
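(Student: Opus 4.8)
The plan is to translate the isotropy hypothesis into statements about the complex bilinear products $G_{ab}:=f_a\cdot f_b$ of the local meromorphic sections appearing in \eqref{harmonicsequence1}, and to propagate their vanishing by differentiation. Observe that $G_{ab}=G_{ba}$ and that, since $\overline f_j$ is a local section of $\overline{\varphi_j}=\varphi_{-j}$ and $f_0=\varphi$ is real, the condition $\varphi\perp_h\varphi_j$ is equivalent to $G_{0,-j}=0$ and, by complex conjugation, also to $G_{0,j}=0$; hence having isotropy order $r$ means $G_{0,j}=0$ for $1\le j\le r$ but $G_{0,r+1}\neq0$. The first thing to establish is the triangle of vanishing
\begin{equation*}
G_{ab}=0\qquad\text{for all }a,b\ge 0\text{ with }1\le a+b\le r .
\end{equation*}
This follows at once from the fact, recalled above, that every $\varphi_k$ in the harmonic sequence has the same isotropy order $r$, which together with $\overline{\varphi_k}=\varphi_{-k}$ forces $G_{ab}=0$ whenever $1\le|a+b|\le r$; alternatively it can be obtained by the same bootstrapping used in the passage from $r\ge 2$ to $r\ge 3$ preceding the statement, repeatedly differentiating and using $G_{ba}=G_{ab}$.

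The only analytic ingredient is the first identity in \eqref{harmonicsequence1}, which gives, for all $a,b$,
\begin{equation}\label{zrec}
\frac{\partial}{\partial z}(f_a\cdot f_b)=f_{a+1}\cdot f_b+f_a\cdot f_{b+1}+\Big(\tfrac{\partial}{\partial z}\log|f_a|^2+\tfrac{\partial}{\partial z}\log|f_b|^2\Big)f_a\cdot f_b .
\end{equation}
Consequently, wherever $G_{ab}$ vanishes identically one has $G_{a+1,b}+G_{a,b+1}=0$. (Since the $G_{ab}$ are real-analytic, vanishing on a nonempty open set already gives vanishing everywhere they are defined; the poles that the logarithmic derivatives in \eqref{zrec} may have at zeros of the $f_c$'s are harmless, as they multiply the vanishing factor $G_{ab}$, and one may in any case work on the complement of a discrete set and extend by continuity.)

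The argument then runs as follows: assume, for a contradiction, that $r=2m$ is even; since $r>0$, $m\ge1$. As $m+m=r$, the triangle gives $G_{mm}=0$, and then \eqref{zrec} with $a=b=m$ yields $2\,G_{m+1,m}=0$, so $G_{m+1,m}=0$ --- an entry on the anti-diagonal $a+b=r+1$. To transport this to $G_{0,r+1}$, slide along that anti-diagonal: for $a=m-1,m-2,\dots,0$, applying \eqref{zrec} at $(a,b)=(a,2m-a)$ --- where $G_{a,2m-a}=0$ because $a+(2m-a)=r$ --- gives $G_{a,2m+1-a}=-\,G_{a+1,2m-a}$. Iterating, and using $G_{m,m+1}=G_{m+1,m}=0$, we get $G_{0,2m+1}=(-1)^m G_{m,m+1}=0$, hence also $G_{0,-(2m+1)}=0$ by conjugation. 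Therefore $\varphi\perp_h\varphi_{2m+1}$, contradicting the assumption that $\varphi$ has isotropy order exactly $r=2m$. Hence $r$ is odd.

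The computations involved --- formula \eqref{zrec} and the index bookkeeping --- are routine; the one genuinely delicate point is structural. Identity \eqref{zrec} links only consecutive anti-diagonals, so one cannot reach $G_{0,r+1}$ directly from the vanishing of the $0$-th row: one must first exploit the whole anti-diagonal $a+b=r$, jump to the anti-diagonal $a+b=r+1$ at the symmetric entry $G_{mm}$ (which is available precisely when $r$ is even), and only then slide back to $G_{0,r+1}$.
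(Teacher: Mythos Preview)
Your proof is correct and follows exactly the strategy the paper only hints at (``This argument can be extended\dots''): the paper's displayed step $f_1\cdot f_1=0\Rightarrow f_2\cdot f_1=0\Rightarrow r\ge 3$ is precisely your argument in the case $m=1$, and your anti-diagonal bookkeeping is the natural way to make the extension rigorous for general even $r=2m$. The only remark is that your appeal to ``every $\varphi_k$ has the same isotropy order'' to justify the vanishing triangle is unnecessary (and mildly heavy-handed), since your alternative bootstrapping argument---starting from $G_{0,1}=\dots=G_{0,r}=0$ and propagating via \eqref{zrec}---already yields $G_{ab}=0$ for all $a,b\ge 0$ with $1\le a+b\le r$ without any external input.
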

  On the other hand,  since  $\varphi_{-j}=\overline{\varphi}_j$,  it is clear that if $\varphi:\Sigma \to S^n$ is an isotropic harmonic map then its length $l$ must also be odd.
\begin{lem}\label{superconformal}
   If $\varphi:\Sigma\to S^n$ has isotropy order $r\geq m$, $m+1$ is even and $\varphi_{k}\neq\{0\}$ is real, with $k=\frac{m+1}{2}$, then the harmonic map $\varphi$ is superconformal in $S^m=S^n\cap W$, where the constant $m+1$-dimensional subspace is given by $W\otimes \C=\varphi_{k}\oplus\bigoplus_{i=-k+1}^{k-1}\varphi_i.$
\end{lem}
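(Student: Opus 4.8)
The plan is to use the reality of $\varphi_k$ to ``close up'' the harmonic sequence of $\varphi$ into a periodic one of period $m+1$, and then to read off from the orthogonality relations carried by the harmonic sequence that the displayed subspace is a constant $(m+1)$-plane in which $\varphi$ lives with isotropy order exactly $m$.

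First I would translate the hypothesis. Since $\overline{\varphi}_j=\varphi_{-j}$ for a harmonic map into $S^n$, saying that $\varphi_k$ is real means $\varphi_k=\varphi_{-k}$ as line subbundles of $\underline{\mathbb{C}}^{n+1}$. The bundles $\varphi_{k\pm 1}$ are, by construction, the images of the second fundamental forms $A'$ and $A''$ of $\varphi_k$, and $\varphi_{-k\pm 1}$ are those of $\varphi_{-k}$; since $\varphi_k=\varphi_{-k}$ these coincide in pairs, and iterating gives $\varphi_{i+m+1}=\varphi_i$ for all $i$, i.e. the harmonic sequence is periodic of period $m+1=2k$. Before going on I would dispose of the isotropic case: if some $\varphi_i$ with $i\neq 0$ vanished, then $\varphi$ would be isotropic and the terms of its (necessarily finite) harmonic sequence would be pairwise $h$-orthogonal, so $\varphi_{-k}=\varphi_k$ would force $k=0$, contradicting $k=\tfrac{m+1}{2}\ge 1$. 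Hence every $\varphi_i$ is a genuine nonzero line bundle, $\varphi$ has finite isotropy order, and by periodicity this order is exactly $m$.

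Next I would collect the orthogonality relations. Every $\varphi_i$ has the same isotropy order as $\varphi$, so $\varphi_i\perp_h\varphi_{i+l}$ whenever $1\le l\le m$; combined with periodicity this gives $\varphi_i\perp_h\varphi_j$ whenever $i\not\equiv j\pmod{m+1}$. In particular the $m+1$ line bundles $\varphi_k,\varphi_{-k+1},\dots,\varphi_{k-1}$ are mutually $h$-orthogonal, hence pointwise independent, so $\underline{W}:=\varphi_k\oplus\bigoplus_{i=-k+1}^{k-1}\varphi_i$ has rank $m+1$. I would then verify that $\underline{W}$ is a constant subbundle by means of the structure equations \eqref{harmonicsequence1}: for a local meromorphic section $f_j$ of $\varphi_j$ one has $\partial f_j/\partial z\in\varphi_j\oplus\varphi_{j+1}$ and $\partial f_j/\partial\bar z\in\varphi_{j-1}$, and when $-k+1\le j\le k$ the indices $j-1,j,j+1$ all reduce modulo $m+1$ into $\{-k+1,\dots,k\}$ — this is exactly where $\varphi_{k+1}=\varphi_{-k+1}$ and $\varphi_{-k}=\varphi_k$ enter — so $d$ preserves the sections of $\underline{W}$, whence $\underline{W}=W\otimes\mathbb{C}$ for a fixed complex subspace; moreover $\overline{\underline{W}}=\bigoplus_i\varphi_{-i}=\underline{W}$ by periodicity, so $W:=\underline{W}\cap\underline{\mathbb{R}}^{n+1}$ is a real $(m+1)$-plane. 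Since $\varphi=\varphi_0$ is real and lies in $\underline{W}$, the map $\varphi$ takes values in $S^n\cap W=S^m$.

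Finally I would regard $\varphi$ as a harmonic map into $\mathbb{C}P^m=\mathbb{P}(W\otimes\mathbb{C})$. Because $W\otimes\mathbb{C}$ is constant, the Hermitian projections used to build the harmonic sequence inside $W\otimes\mathbb{C}$ agree with those in $\mathbb{C}^{n+1}$, so the harmonic sequence of $\varphi$ in $\mathbb{C}P^m$ is again $\{\varphi_i\}$, still periodic of period $m+1$; therefore $\varphi\perp_h\varphi_i$ for $1\le i\le m$ while $\varphi\not\perp_h\varphi_{m+1}=\varphi_0$, i.e. the isotropy order of $\varphi$ in $\mathbb{C}P^m$ equals $m=\dim_{\mathbb{C}}\mathbb{C}P^m$, which is precisely the assertion that $\varphi$ is superconformal in $S^m=S^n\cap W$. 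I expect the only genuinely delicate point to be the first step — extracting periodicity of the harmonic sequence from the reality of $\varphi_k$ and disposing of the isotropic case — after which constancy and reality of $\underline{W}$ and the isotropy-order count are routine bookkeeping with \eqref{harmonicsequence1} and with the invariance of the isotropy order along a harmonic sequence.
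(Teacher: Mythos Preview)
Your argument is correct and follows the same line as the paper's proof: from the reality of $\varphi_k$ you deduce $\varphi_{k+1}=\varphi_{-k+1}$, hence periodicity of the harmonic sequence with period $m+1$, which forces the isotropy order to equal $m$ and makes $\underline{W}$ closed under $\partial/\partial z$ and $\partial/\partial\bar z$, so constant. The paper compresses all of this into two sentences; your version spells out the bookkeeping (mutual orthogonality of the $\varphi_i$, the reality of $W$, and the exclusion of the isotropic case) that the paper leaves implicit.
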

\begin{proof}Since $\varphi_k\neq\{0\}$ is real, we have $\varphi_{k+1}=\overline{\varphi}_{k-1}=\varphi_{-k+1}$. Consequently, $\varphi$ has finite isotropy order $r=m$ and
 $W\otimes \C=\varphi_{k}\oplus\bigoplus_{i=-k+1}^{k-1}\varphi_i$ is a constant bundle, that is,
 $\varphi$ is superconformal in  $S^m=S^n\cap W$.
    \end{proof}

%We will say that an harmonic immersion $\varphi:\Sigma\to S^6$ is \emph{associative} if $\varphi=\varphi_1\times \varphi_{-1}$.

% It is well known that isotropic harmonic maps can be covered by a horizontal holomorphic map into a suitable \emph{twistor space} and that all harmonic maps from $S^2$ into $S^n$ and $\mathbb{C}P^n$ are isotropic. Constructions of non-isotropic harmonic maps have been obtained by integration of certain commuting Hamiltonian ODE's \cite{BPW,BP,Hi,HTU}; all non-isotropic harmonic maps from the $2$-tori into $S^n$ and $\mathbb{C}P^n$ arise in such way \cite{Burstall}.

\subsection{Almost complex curves in the nearly K\"{a}hler $6$-sphere}
The standard nearly K\"{a}hler structure $J$ on the $6$-dimensional unit sphere $S^6$ can be written in terms of the cross product $\times$ as follows:
$J\mathbf{u}=\mathbf{x}\times \mathbf{u},$
for each $\mathbf{u}\in T_{\mathbf{x}}S^6$ and   $\mathbf{x}\in S^6$.
An \emph{almost complex curve} in  $S^6$ is a non-constant smooth map $\varphi:\Sigma\to S^6$ whose differential is complex linear with respect to the nearly  K\"{a}hler structure $J$. Any such map $\varphi$ is a weakly conformal harmonic map for which $\overline\varphi_1\oplus\varphi\oplus \varphi_1$ is a bundle of associative $3$-planes. In \cite{BVW}, the authors showed that there are four basic types of almost complex curves: (I) linearly full in $S^6$ and superminimal; (II) linearly full in $S^6$ but not superminimal; (III) linearly full in some totally geodesic $S^5$ in $S^6$; (IV) totally geodesic. Twistorial constructions of almost complex curves of type (I) can be found in \cite{BryantO}; almost complex-curves of type (II) and (III) from the $2$-tori  are obtained by integration of commuting Hamiltonian ODE's \cite{BPW,HTU}.

The K\"{a}hler angle $\theta:\Sigma\to [0,\pi]$ of a weakly conformal harmonic map $\varphi:\Sigma \to S^6$ is given by  \cite{BVW}
\begin{equation}\label{kangle}(f_1\times f_{-1})\cdot f_0=-i\cos \theta.\end{equation}
 Almost complex curves are precisely those  weakly conformal harmonic maps with $\theta=0$.
In \cite{BVW}, the authors also proved the following.
\begin{thm}\cite{BVW}\label{bvwp}
Let  $\varphi:\Sigma \to S^6$ be  a weakly conformal harmonic map  with constant K\"{a}hler angle
 $\theta\neq 0,\pi$ and assume that the ellipse of curvature is a circle. Then there is a unit vector $\mathbf{v}$ such that $\varphi$ is linearly full in the totally geodesic $S^5=S^6\cap \mathbf{v}^\perp$. Moreover, there exist an angle $\beta$  and an almost complex curve $\tilde\varphi$ of type (III) such that $\varphi=R_\mathbf{v}(\beta)
 \tilde\varphi$, where $R_\mathbf{v}(\beta)\in SO(7)$  is the rotation defined as follows:
 \begin{equation}\label{rotation}
 R_\mathbf{v}(\beta)\mathbf{x}=\left\{
                                 \begin{array}{ll}
                                   \mathbf{v}, & \hbox{ if $\mathbf{x}=\mathbf{v}$;} \\
                                   \cos \beta\, \mathbf{x}+\sin\beta \,\mathbf{v}\times \mathbf{x}, & \,\,\hbox{if  $\mathbf{x}\in S^6\cap \mathbf{v}^\perp$.}
                                 \end{array}
                               \right.
\end{equation}
 \end{thm}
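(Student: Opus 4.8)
The plan is to build a moving frame from the harmonic sequence of $\varphi$, read off the three hypotheses as algebraic and differential relations, use the circular‑ellipse hypothesis to force $\varphi$ into a totally geodesic $S^5=S^6\cap\mathbf v^\perp$ for a fixed unit vector $\mathbf v$, and then undo the non‑zero K\"ahler angle by the rotation $R_{\mathbf v}(\beta)$ for a suitable constant $\beta$.

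First I would fix a local conformal coordinate $z$ and pass to the harmonic sequence $\{f_j\}$ normalised as in \eqref{harmonicsequence1}, with $f_0=\varphi$ and $f_1=\partial_z\varphi$. Weak conformality gives $f_1\cdot f_1=0$, hence (differentiating, exactly as in the reasoning preceding Lemma \ref{r=3}) $f_1\cdot f_2=0$ and $f_2\cdot f_2=-f_3\cdot f_1$; the constant K\"ahler angle gives $(f_1\times f_{-1})\cdot f_0=-i\cos\theta$ with $\theta$ fixed, and by (P2)--(P3) the vector $f_1\times f_{-1}$ is $i$ times a real unit vector; and, since the $(2,0)$‑part of the second fundamental form of $\varphi$ in $S^6$ is $f_2$ up to a scalar, the ellipse of curvature is a circle precisely when $f_2\cdot f_2=0$, that is, $f_3\cdot f_1=0$. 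As all members of a harmonic sequence share the same isotropy order, $f_3\cdot f_1=0$ is equivalent to $\varphi$ having isotropy order $r\ge4$, hence $r\ge5$ or $r=\infty$ by Lemma \ref{r=3}. The case $r=\infty$ ($\varphi$ superminimal) is excluded when $\theta\ne0,\pi$ by the rigidity fact that a superminimal surface of constant K\"ahler angle in $S^6$ has $\theta\in\{0,\pi\}$; so $r$ is finite, and being odd with $5\le r\le n=6$ it equals $5$. A harmonic‑sequence count then shows $r=5$ is incompatible with linear fullness in $S^6$ (the sequence would be $6$‑periodic, forcing $\varphi_3=\varphi_{-3}$ and, by Lemma \ref{superconformal} with $m=5$, $k=3$, superconformality in a totally geodesic $S^5$, a contradiction), while fullness in a smaller totally geodesic sphere would force $r\le4$; hence $\varphi$ is linearly full in a totally geodesic $S^5=S^6\cap\mathbf v^\perp$ for a fixed unit vector $\mathbf v$.

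For the rotation, note that on $\mathbf v^\perp$ the map $\mathbf x\mapsto\mathbf v\times\mathbf x$ is a complex structure (by (P6)) and $R_{\mathbf v}(\beta)|_{\mathbf v^\perp}=\cos\beta\,\mathrm{id}+\sin\beta\,(\mathbf v\times\cdot)$; since $R_{\mathbf v}(\beta)$ is a constant isometry, $\tilde\varphi:=R_{\mathbf v}(-\beta)\varphi$ is again a weakly conformal harmonic map with $\tilde f_j=R_{\mathbf v}(-\beta)f_j$. Using only (P1)--(P7) one checks that, for $a,b,c\in\mathbf v^\perp$,
\[
\bigl(R_{\mathbf v}(\beta)a\times R_{\mathbf v}(\beta)b\bigr)\cdot R_{\mathbf v}(\beta)c=\cos 3\beta\,(a\times b)\cdot c-\sin 3\beta\,\bigl(\mathbf v\times(a\times b)\bigr)\cdot c ;
\]
the factor $3\beta$ reflects that $R_{\mathbf v}(\beta)$ lies in $G_2$ (so preserves the triple product) exactly when $\beta\in\frac{2\pi}{3}\mathbb{Z}$. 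Applying this with $a=f_1$, $b=f_{-1}$, $c=f_0$ shows that $(\tilde f_1\times\tilde f_{-1})\cdot\tilde f_0$ is an explicit trigonometric combination of $\theta$ and of a scalar built from $f_1\times f_{-1}$ and $\mathbf v$; the content of the circular‑ellipse hypothesis (with $\theta$ constant) is that this scalar is constant, which lets one solve for a constant $\beta$ — one expects $\beta$ to be a third of the K\"ahler angle — making the K\"ahler angle of $\tilde\varphi$ vanish. Then $\tilde\varphi$ is an almost complex curve; being linearly full in the totally geodesic $S^5=S^6\cap\mathbf v^\perp$, it is of type (III) in the classification of \cite{BVW}, and $\varphi=R_{\mathbf v}(\beta)\tilde\varphi$.

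The main obstacle is showing that the scalar appearing in the last step is constant: this is where the bulk of the computation lies, and it requires combining the Koszul--Malgrange holomorphicity of the $f_j$, the structure equations \eqref{harmonicsequence1}, and the Jacobi‑type identities (P5), (P7) for the seven‑dimensional cross product. A secondary point is the rigidity input ruling out superminimal surfaces of constant K\"ahler angle $\ne0,\pi$; and throughout one must keep $\sin\theta\ne0$ in play, so that $\mathbf v$ and $\beta$ are well defined, and check that $\tilde\varphi$, a priori only defined away from the zeros of $f_1$, extends to a genuine almost complex curve on all of $\Sigma$.
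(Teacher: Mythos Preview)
The paper does not prove this theorem at all: note the citation \cite{BVW} attached to the theorem header. It is quoted as a known result of Bolton, Vrancken and Woodward and is used as a black box (in the proofs of Theorems \ref{minimal} and \ref{isotropic}). So there is no ``paper's own proof'' to compare your attempt against.

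That said, a few comments on your sketch as a standalone argument. Your overall strategy---push the isotropy order up via $f_2\cdot f_2=0$, pin $\varphi$ into a hyperplane $\mathbf v^\perp$, then rotate by $R_{\mathbf v}(\beta)$ to kill the K\"ahler angle---is indeed the shape of the original \cite{BVW} proof. But two of your steps are not yet arguments. First, the ``rigidity fact'' that a superminimal surface in $S^6$ with constant K\"ahler angle must have $\theta\in\{0,\pi\}$ is precisely the kind of statement that needs the full analysis in \cite{BVW}; you cannot simply invoke it. Second, your dimension count for locating $\varphi$ in $S^5$ is circular as written: you argue that $r=5$ together with fullness in $S^6$ forces (via Lemma \ref{superconformal}) superconformality in a totally geodesic $S^5$, call that a contradiction, and then conclude fullness in $S^5$---but landing in $S^5$ is exactly what you wanted, not a contradiction. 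What you actually need is to produce the constant unit vector $\mathbf v$ directly (in \cite{BVW} it comes from the structure equations and the constancy of $\theta$), and this is tied to the very ``scalar is constant'' obstacle you flag at the end. Finally, your displayed triple-product identity for $R_{\mathbf v}(\beta)$ is the right idea, but the statement that $R_{\mathbf v}(\beta)\in G_2$ iff $\beta\in\frac{2\pi}{3}\mathbb Z$ is not quite correct (the relevant periodicity depends on how $\mathbf v\times\cdot$ interacts with the associative $3$-form, and the answer is sensitive to conventions); you should derive the transformation law carefully rather than asserting the period.
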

 These  conditions on $\varphi$ can be expressed in terms of the harmonic sequence as follows.
 \begin{lem}\cite{BVW}\label{KCCE} Let $\varphi:\Sigma \to S^6$ be a  weakly conformal harmonic map. Then, we have:
 \begin{enumerate}
\item[1)]  The ellipse of curvature of $\varphi$ is a circle if, and only if, $\varphi_2$ is an isotropic line bundle.
\item[2)] The ellipse of curvature of $\varphi$ is a point if, and only if, $\varphi_2=\{0\}$.
\item[3)] $\varphi$ has constant K\"{a}hler angle if, and only if, $\varphi_0\times \varphi_1 \perp_h \overline{\varphi}_{2}$.
   \end{enumerate}
 \end{lem}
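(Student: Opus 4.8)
The plan is to push everything through the harmonic sequence $\{f_j\}$ attached to $\varphi$ by \eqref{harmonicsequence1}. Since $|f_0|=|\varphi|\equiv 1$, the first equation in \eqref{harmonicsequence1} gives $\partial\varphi/\partial z=f_1$, hence $f_2=\varphi_{zz}-\bigl(\tfrac{\partial}{\partial z}\log|f_1|^2\bigr)\varphi_z$. Using conformality ($\varphi_z\cdot\varphi_z=0$, $\varphi_z\cdot\varphi=0$) and the harmonic map equation ($\varphi_{z\bar z}$ is a multiple of $\varphi$), a short check gives $f_2\cdot\varphi=f_2\cdot\varphi_z=f_2\cdot\varphi_{\bar z}=0$, so $f_2$ is exactly the component of $\varphi_{zz}$ normal to $\varphi(\Sigma)$ inside $S^6$, i.e.\ the value of the second fundamental form of the (branched) minimal immersion $\varphi$ on $\tfrac{\partial}{\partial z}$. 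As $\varphi$ is minimal, the $(1,1)$-part of its second fundamental form vanishes as well, so at each point the ellipse of curvature is the closed curve $\alpha\mapsto \tfrac{2}{\lambda^2}\,\mathrm{Re}\!\bigl(e^{2i\alpha}f_2\bigr)$, $\alpha\in\R$, where $\lambda^2$ is the conformal factor.

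Granting this, 1) and 2) are elementary. Writing $f_2=A+iB$ with $A,B\in\R^7$, we have $\mathrm{Re}(e^{2i\alpha}f_2)=(\cos 2\alpha)A-(\sin 2\alpha)B$. This is a single point iff $A=B=0$, i.e.\ $f_2\equiv 0$, i.e.\ $\varphi_2=\{0\}$, which is 2) (the ellipse being understood as a point at every point of $\Sigma$). And it is a genuine circle iff $A\perp B$ and $|A|=|B|$, i.e.\ iff $f_2\cdot f_2=(|A|^2-|B|^2)+2i\,A\cdot B=0$, i.e.\ iff the line bundle $\varphi_2$ is isotropic, which is 1) (since $\varphi_2$ is a line bundle, $f_2$ is not identically zero, so the circle is nondegenerate, and conversely $f_2\cdot f_2=0$ with $f_2\neq 0$ forces $|A|=|B|\neq 0$, $A\perp B$).

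For 3): $\theta$ takes values in $[0,\pi]$, so it is constant iff $\cos\theta$ is, and since $\cos\theta$ is real-valued this is equivalent to $\tfrac{\partial}{\partial z}(\cos\theta)\equiv 0$. I differentiate \eqref{kangle}, $\cos\theta=i\,(f_1\times f_{-1})\cdot f_0$, by the Leibniz rule for $\times$ and $\cdot$, and substitute the structure equations $\partial f_0/\partial z=f_1$, $\partial f_1/\partial z=f_2+(\tfrac{\partial}{\partial z}\log|f_1|^2)f_1$, $\partial f_{-1}/\partial z=f_0-(\tfrac{\partial}{\partial z}\log|f_1|^2)f_{-1}$ (the last using $|f_1||f_{-1}|=1$). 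The terms carrying $\tfrac{\partial}{\partial z}\log|f_1|^2$ cancel in pairs, the diagonal terms die by (P1) (e.g.\ $(f_1\times f_{-1})\cdot f_1=0$ and $(f_1\times f_0)\cdot f_0=0$), and one is left with $\tfrac{\partial}{\partial z}(\cos\theta)=i\,(f_2\times f_{-1})\cdot f_0$. A final manipulation with the symmetry identities (P3)--(P4) and the relation $\overline{f_1}=-|f_1|^2 f_{-1}$ (which follows from \eqref{harmonicsequence1} and the reality of $\varphi$) rewrites $(f_2\times f_{-1})\cdot f_0$, up to a nowhere-vanishing scalar function, as a local section generating the pairing of $\varphi_0\times\varphi_1$ against $\overline\varphi_2$. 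Hence $\tfrac{\partial}{\partial z}(\cos\theta)\equiv 0$ iff $\varphi_0\times\varphi_1\perp_h\overline\varphi_2$, which is 3).

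The delicate step is the very last one in 3): correctly tracking signs and complex conjugations while collapsing $(f_2\times f_{-1})\cdot f_0$ by means of (P1)--(P7) and relating the conjugate sections $\overline{f_j}$ to the $f_{-j}$ via \eqref{harmonicsequence1}; the rest is the standard structure of harmonic sequences together with the elementary geometry of the ellipse of curvature. One should also keep in mind that the $f_j$ are only meromorphic, so the three equivalences hold away from isolated points and extend by continuity; in 1) and 2) this is harmless because $f_1$ has no zeros (as $\varphi$ is an immersion).
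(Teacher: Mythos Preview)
The paper does not prove this lemma at all; it is quoted from \cite{BVW} and stated without proof. So there is no argument in the paper to compare yours against.

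On its own merits: your treatment of 1) and 2) is correct and is the standard computation---$f_2$ is exactly the $(2,0)$-part of the second fundamental form of the minimal immersion, and the ellipse of curvature is the image of $\alpha\mapsto \mathrm{Re}(e^{2i\alpha}f_2)$ up to a positive factor, from which the two equivalences are immediate. Your derivation $\partial_z(\cos\theta)=i\,(f_2\times f_{-1})\cdot f_0$ in 3) is also correct.

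The gap is the step you yourself flag as ``delicate'' and then do not carry out. Using $f_{-1}=-|f_1|^{-2}\overline{f_1}$ (which you correctly quote) together with (P4) gives
\[
(f_2\times f_{-1})\cdot f_0
\;=\;f_2\cdot(f_{-1}\times f_0)
\;=\;|f_1|^{-2}\,f_2\cdot\overline{(f_0\times f_1)}
\;=\;|f_1|^{-2}\,h\bigl(f_2,\,f_0\times f_1\bigr),
\]
so constancy of $\theta$ is equivalent to $h(f_0\times f_1,f_2)=0$, i.e.\ $(f_0\times f_1)\cdot\overline{f_2}=0$, i.e.\ $\varphi_0\times\varphi_1\perp_h\varphi_2$. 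The condition literally written in the lemma, $\varphi_0\times\varphi_1\perp_h\overline{\varphi}_2$, unwinds (via $h(\mathbf{x},\mathbf{y})=\mathbf{x}\cdot\bar{\mathbf{y}}$ and the fact that $\overline{f_2}$ spans $\overline{\varphi}_2$) to the \emph{different} pairing $(f_0\times f_1)\cdot f_2=0$. These two complex scalars are not the same in general, and you have not supplied any identity linking them. Either the statement as transcribed carries a conjugation typo from \cite{BVW}, or there is an additional argument you have omitted; in either case you must write the final identification out explicitly rather than asserting it.

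One small overreach: the lemma is stated for \emph{weakly} conformal $\varphi$, so $f_1$ may vanish at isolated points; your closing remark that ``$f_1$ has no zeros (as $\varphi$ is an immersion)'' assumes more than you are given.
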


\section{Surfaces obtained from Harmonic maps into the $6$-sphere}

Let $\Sigma$ be a Riemann surface with local conformal coordinate $z=x+iy$, and let
$\varphi:\Sigma\to S^{n-1}$ be a harmonic map, that is, $\bigtriangleup \varphi \perp T_\varphi S^{n-1}=\{\mathbf{u}\in \mathbb{R}^n|\,\varphi\cdot \mathbf{u}=0\}$.
For $n=7$, this means that $\varphi\times \bigtriangleup \varphi=0$, which is equivalent to the closeness of the one form $\omega=\varphi\times *d\varphi$. Hence, if $\varphi:\Sigma\to S^6$ is a harmonic immersion and $\Sigma$ is simply-connected, we can integrate to
 obtain an immersion $F:\Sigma\to \mathbb{R}^7$ such that $dF=\varphi\times *d\varphi$.
\begin{lem}\label{vv}At corresponding points, the tangent spaces $T\varphi$ and $TF$ satisfy $T\varphi\times T\varphi\perp TF$.
  \end{lem}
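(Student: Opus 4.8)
The plan is to compute everything in a local conformal coordinate $z=x+iy$, writing $\varphi_x=\partial\varphi/\partial x$ and $\varphi_y=\partial\varphi/\partial y$. Since the conformal Hodge star satisfies $*dx=dy$ and $*dy=-dx$, the defining equation $dF=\varphi\times *d\varphi$ gives $\partial F/\partial x=-\varphi\times\varphi_y$ and $\partial F/\partial y=\varphi\times\varphi_x$, so that $TF=\mathrm{span}\{\varphi\times\varphi_x,\,\varphi\times\varphi_y\}$. On the other hand $T\varphi=\mathrm{span}\{\varphi_x,\varphi_y\}$, and by bilinearity together with the antisymmetry (P3) the set of all cross products of vectors of $T\varphi$ is the line $T\varphi\times T\varphi=\mathrm{span}\{\varphi_x\times\varphi_y\}$. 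Hence the lemma reduces to the two scalar identities $(\varphi_x\times\varphi_y)\cdot(\varphi\times\varphi_x)=0$ and $(\varphi_x\times\varphi_y)\cdot(\varphi\times\varphi_y)=0$.

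To obtain these I would use the cyclic symmetry (P4) to transfer the outer cross product onto $\varphi$: applying $\mathbf{x}\cdot(\mathbf{y}\times\mathbf{z})=\mathbf{z}\cdot(\mathbf{x}\times\mathbf{y})$ with $\mathbf{x}=\varphi$, $\mathbf{z}=\varphi_x\times\varphi_y$, and $\mathbf{y}$ equal to $\varphi_x$, respectively $\varphi_y$, rewrites the two left-hand sides as $\varphi\cdot\big(\varphi_x\times(\varphi_x\times\varphi_y)\big)$ and $\varphi\cdot\big(\varphi_y\times(\varphi_x\times\varphi_y)\big)$. Now (P3) together with (P6) collapses each of these double cross products to a linear combination of $\varphi_x$ and $\varphi_y$, namely $\varphi_x\times(\varphi_x\times\varphi_y)=-|\varphi_x|^2\varphi_y+(\varphi_x\cdot\varphi_y)\varphi_x$ and $\varphi_y\times(\varphi_x\times\varphi_y)=-\varphi_y\times(\varphi_y\times\varphi_x)=|\varphi_y|^2\varphi_x-(\varphi_x\cdot\varphi_y)\varphi_y$. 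Since $|\varphi|^2\equiv 1$ forces $\varphi\cdot\varphi_x=\varphi\cdot\varphi_y=0$, pairing these combinations with $\varphi$ yields zero in both cases, which proves $T\varphi\times T\varphi\perp TF$.

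I would close with the observation that the statement is purely algebraic and pointwise: it uses only $\varphi(\Sigma)\subset S^6$ and the identities (P3), (P4), (P6), not the harmonicity of $\varphi$ — harmonicity enters only earlier, to make $\omega=\varphi\times *d\varphi$ closed so that $F$ exists. The same short computation, via (P2) and the identity $(\mathbf{a}\times\mathbf{b})\cdot(\mathbf{a}\times\mathbf{c})=(\mathbf{a}\cdot\mathbf{a})(\mathbf{b}\cdot\mathbf{c})-(\mathbf{a}\cdot\mathbf{b})(\mathbf{a}\cdot\mathbf{c})$ (itself a consequence of (P3), (P4), (P6)), also shows $|\partial F/\partial x|^2=|\varphi_y|^2$, $|\partial F/\partial y|^2=|\varphi_x|^2$ and $(\partial F/\partial x)\cdot(\partial F/\partial y)=-\varphi_x\cdot\varphi_y$, so the first fundamental forms of $\varphi$ and of $F$ have the same determinant and $F$ is automatically an immersion. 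There is no real obstacle in the argument; the only point requiring care is to pick, among the identities (P1)--(P7), the manipulation that reduces each inner product to $\varphi\cdot(\text{double cross product})$ — it is (P4) followed by (P6) — after which everything vanishes because $\varphi$ is orthogonal to its own derivatives.
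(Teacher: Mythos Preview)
Your proof is correct and follows essentially the same route as the paper: both identify $TF=\mathrm{span}\{\varphi\times\varphi_x,\varphi\times\varphi_y\}$ and $T\varphi\times T\varphi=\mathrm{span}\{\varphi_x\times\varphi_y\}$, then use (P4) to convert the relevant inner products into $(\text{something})\cdot(\text{double cross product})$, expand via (P6), and conclude from $\varphi\cdot\varphi_x=\varphi\cdot\varphi_y=0$. The only cosmetic differences are an overall sign in $F_x,F_y$ (a Hodge-star convention; the paper takes $F_x=\varphi\times\varphi_y$, $F_y=-\varphi\times\varphi_x$) and which factor you single out with (P4): you isolate $\varphi$ and land on $\varphi\cdot\big(\varphi_x\times(\varphi_x\times\varphi_y)\big)$, whereas the paper isolates $\varphi_x$ and lands on $-\varphi_x\cdot\big(\varphi_y\times(\varphi_y\times\varphi)\big)$ --- the same identities, the same vanishing.
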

\begin{proof}
The tangent space $TF$ at $z=x+iy$ is generated by
$F_x=\varphi\times \varphi_y$ and $F_y=-\varphi\times\varphi_x$; and $T\varphi\times T\varphi$ at $z=x+iy$ is generated by  $\varphi_x\times \varphi_y$.
Taking into account the properties  for the cross product, we have
\begin{align*}
(\varphi_x\times \varphi_y)\cdot (\varphi\times \varphi_y)&=-\varphi_x\cdot\big( \varphi_y\times (\varphi_y\times \varphi)\big)=\varphi_x\cdot(|\varphi_y|^2\varphi-(\varphi\cdot\varphi_y)\varphi_y)\\&=|\varphi_y|^2(\varphi_x\cdot \varphi)-(\varphi_y\cdot\varphi)(\varphi_x\cdot\varphi_y)=0,
\end{align*}
since $\varphi_x\cdot \varphi=\varphi_y\cdot \varphi=0$. Hence $F_x\cdot(\varphi_x\times \varphi_y)=0$; and, similarly,  $F_y\cdot(\varphi_x\times \varphi_y)=0$.
\end{proof}
Making use of the properties for the cross product, we also obtain the following formulae for the first and second fundamental forms of the immersion $F$ in terms of $\varphi$ and its derivatives.
\begin{prop}Let $\mathbf{I}_F$ and $\mathbf{I\!I}_F$ be the first and the second fundamental forms of $F:\Sigma\to \mathbb{R}^7$, respectively. Let $N$ be a vector field of the normal bundle $TF^\perp$.
  Then, with respect to the local conformal coordinate $z=x+iy$ of $\Sigma$, we have
  \begin{equation}\label{fff}
  \mathbf{I}_F=\left(\!\!\!
                   \begin{array}{cc}
                     |\varphi_y|^2 & -\varphi_x\cdot\varphi_y \\
                     -\varphi_x\cdot\varphi_y & |\varphi_x|^2 \\
                   \end{array}
                 \!\!\!\right)\end{equation}
   \begin{equation}\label{sff}
    \mathbf{I\!I}_F^N:=\mathbf{I\!I}_F\cdot N=\left(\!\!\!
                   \begin{array}{cc}
                     (\varphi_x\times \varphi_y)\cdot N +(\varphi\times \varphi_{xy})\cdot N \!\!\!\!\!& (\varphi\times\varphi_{yy})\cdot N \\
                     (\varphi\times\varphi_{yy})\cdot N \!\!\!\!\!& (\varphi_x\times \varphi_y)\cdot N -(\varphi\times \varphi_{xy})\cdot N\\
                   \end{array}
                 \!\!\!\right).\end{equation}
\end{prop}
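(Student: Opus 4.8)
The plan is a direct computation from the defining equation $dF=\varphi\times *d\varphi$, using only the normalization $|\varphi|^2=1$ and the listed algebraic identities for $\times$. First I would make the tangent vector fields explicit: unwinding the Hodge star on $\Sigma$ gives, as already recorded in the proof of Lemma~\ref{vv}, $F_x=\varphi\times\varphi_y$ and $F_y=-\varphi\times\varphi_x$. Differentiating $\varphi\cdot\varphi=1$ yields $\varphi\cdot\varphi_x=\varphi\cdot\varphi_y=0$, which will be used throughout.

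For the first fundamental form \eqref{fff} I compute the three inner products of $F_x,F_y$. The diagonal entries come immediately from (P2): $F_x\cdot F_x=(\varphi\times\varphi_y)\cdot(\varphi\times\varphi_y)=|\varphi|^2|\varphi_y|^2-(\varphi\cdot\varphi_y)^2=|\varphi_y|^2$, and likewise $F_y\cdot F_y=|\varphi_x|^2$. For the off-diagonal entry I would first record the polarized version of (P2),
\begin{equation*}
(\mathbf{a}\times\mathbf{b})\cdot(\mathbf{a}\times\mathbf{c})=(\mathbf{a}\cdot\mathbf{a})(\mathbf{b}\cdot\mathbf{c})-(\mathbf{a}\cdot\mathbf{b})(\mathbf{a}\cdot\mathbf{c}),
\end{equation*}
which is a one-line consequence of (P4) and (P6) (e.g.\ $(\mathbf{a}\times\mathbf{b})\cdot(\mathbf{a}\times\mathbf{c})=\mathbf{c}\cdot\big((\mathbf{a}\times\mathbf{b})\times\mathbf{a}\big)=-\mathbf{c}\cdot\big(\mathbf{a}\times(\mathbf{a}\times\mathbf{b})\big)$, then apply (P6)); with $\mathbf{a}=\varphi$ this gives $F_x\cdot F_y=-(\varphi\times\varphi_y)\cdot(\varphi\times\varphi_x)=-\big(|\varphi|^2(\varphi_x\cdot\varphi_y)-(\varphi\cdot\varphi_x)(\varphi\cdot\varphi_y)\big)=-\varphi_x\cdot\varphi_y$, as claimed.

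For the second fundamental form \eqref{sff}, since the ambient connection on $\mathbb{R}^7$ is flat one has $\mathbf{I\!I}_F(\partial_i,\partial_j)\cdot N=F_{ij}\cdot N$ for any section $N$ of $TF^\perp$, so it suffices to differentiate $F_x$ and $F_y$ once more. This gives $F_{xx}=\varphi_x\times\varphi_y+\varphi\times\varphi_{xy}$, then $F_{xy}=\varphi_y\times\varphi_y+\varphi\times\varphi_{yy}=\varphi\times\varphi_{yy}$ (using $\varphi_y\times\varphi_y=0$ from (P3)), and $F_{yy}=-\varphi_y\times\varphi_x-\varphi\times\varphi_{xy}=\varphi_x\times\varphi_y-\varphi\times\varphi_{xy}$; taking the inner product with $N$ produces exactly the matrix in \eqref{sff}. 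As a consistency check, computing $F_{xy}$ the other way as $\partial_x F_y=-\varphi\times\varphi_{xx}$ and comparing with $F_{xy}=\varphi\times\varphi_{yy}$ recovers $(\varphi\times\bigtriangleup\varphi)\cdot N=0$, i.e.\ the harmonicity of $\varphi$.

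There is no real obstacle here: the whole argument is bookkeeping with the identities (P1)--(P7) together with the fact that $\varphi$ is valued in the unit sphere. The only point worth isolating is the polarized identity for $(\mathbf{a}\times\mathbf{b})\cdot(\mathbf{a}\times\mathbf{c})$ above, which is precisely what makes the off-diagonal entries of both fundamental forms collapse to the stated expressions.
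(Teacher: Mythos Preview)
Your proposal is correct and is precisely the direct computation the paper alludes to; the paper itself does not spell out a proof beyond the sentence ``Making use of the properties for the cross product, we also obtain the following formulae,'' so your write-up is in fact more detailed than the original. The only minor remark is that your consistency check actually gives $\varphi\times\bigtriangleup\varphi=0$ as a vector identity (not merely after dotting with $N$), which is exactly the harmonicity condition used to define $F$.
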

If  $\varphi$ is conformal, $F$ is also conformal. Let $e^{2\alpha}$ be the common conformal factor of $\varphi$ and $F$. Taking Lemma \ref{vv} into account, one can check that the mean curvature vector of $F$  is given by
$$\mathbf{h}_F:=\frac12\mathrm{tr}\,\mathbf{I}_F^{-1}\mathbf{I\!I}_F=\frac{e^{-2\alpha}}{2}\big\{\mathbf{I\!I}_F(F_x,F_x)+\mathbf{I\!I}_F(F_y,F_y)\big\}=e^{-2\alpha}\varphi_x\times \varphi_y.$$
 Consider the harmonic sequence $\{\varphi_j\}$ of $\varphi:\Sigma\to S^6$, viewed as map
   into $\mathbb{R}P^6\subset \mathbb{C}P^6$. Since $\varphi$ is conformal, we already know that it has isotopy order $r\geq 3$. Let  $\{f_j\}$ be  a sequence of meromorphic  sections
   satisfying \eqref{harmonicsequence1}. We have
   $\varphi=f_0$, $\varphi_z=f_1$, $\varphi_{\bar z}=-\frac{f_{-1}}{|f_{-1}|^2},$ and the conformal factor $e^{2\alpha}$ of $\varphi$ satisfies $\frac{e^{2\alpha}}{2}=|f_{1}|^2=\frac{1}{|f_{-1}|^2}.$ Hence we can also  write   \begin{equation}\label{mean}\mathbf{h}_F=if_1\times f_{-1}.\end{equation}
Observe that $|\mathbf{h}_F|=1$.

In the conformal case,  $\mathbf{u}=e^{-\alpha}(\cos\theta\, F_x + \sin\theta\, F_y)$ is a unit tangent vector for each $\theta\in\mathbb{R}$ and  the ellipse of curvature of $F$ is given by
\begin{equation}\label{huu}
\mathcal{E}_F=\big\{\mathbf{h}_F+e^{-2\alpha}\cos 2\theta\, P^\perp_{TF}(\varphi\times \varphi_{xy})+e^{-2\alpha}\sin 2\theta\, P^\perp_{TF}(\varphi\times \varphi_{xx})|\,\,\, \theta\in \mathbb{R}\big\}.
\end{equation}

Next we will characterize those conformal harmonic maps $\varphi:\Sigma\to S^6$ for which the corresponding immersions  $F:\Sigma\to \mathbb{R}^7$
belong to certain remarkable classes of surfaces, namely: minimal surfaces;  surfaces with parallel mean curvature vector field; pseudo-umbilical surfaces; isotropic surfaces.

\begin{rem}\label{CGC}
When $\varphi$ is a conformal, we have $\mathbf{I}_\varphi=\mathbf{I}_F$. Hence the Gauss curvatures $K_\varphi$ of $\varphi$ and $K_F$ of $F$ coincide on $\Sigma$; and
those conformal harmonic maps $\varphi$ producing a surface $F$ with constant Gauss curvature are precisely those minimal surfaces $\varphi$ of constant curvature in $S^6$. For a general $n$, minimal surfaces of constant Gauss curvature in $S^n$ were classified by R. Bryant \cite{Bryant1}. As a consequence of Bryant's results, if $F$ has constant Gauss curvature $K_F$, then there are three possibilities:
$K_F=1$ and $\varphi$ is totally geodesic; $K_F=1/6$ and $\varphi$ is an open subset of a Boruvka sphere;
$K_F=0$ and $\varphi$ is a flat minimal surface which can be written as a sum of certain exponentials. More precisely, in the flat case, $\varphi$ takes values in $S^5=S^6\cap V$, for some $6$-dimensional subspace $V$ of $\mathbb{R}^7$, and can be written in the form \cite{Bryant1}
\begin{equation}\label{flat}
  \varphi(z)=\sum_{k=1}^3\mathbf{v}_ke^{\mu_k z-\overline{\mu}_k\bar z}+\overline{\mathbf{v}}_ke^{-\mu_kz+\overline{\mu}_k\bar z},
\end{equation}
where $\pm\mu_1,\pm\mu_2,\pm\mu_3$ are six distinct complex numbers in $S^1$ and $\mathbf{v}_k\in V\otimes \mathbb{C}$ are nonzero vectors satisfying
\begin{equation}\label{condflat}\mathbf{v}_k\cdot\mathbf{v}_j=0,\,\,\,\,\,\,\mathbf{v}_k\cdot\overline{\mathbf{v}}_j=0\,\,\mbox{if $j\neq k$},\,\,\,\,\,\, \sum_{k=1}^3 \mathbf{v}_k\cdot\overline{\mathbf{v}}_k=\frac12,\,\,\,\, \sum_{k=1}^3\mu_k^2\,\mathbf{v}_k\cdot\overline{\mathbf{v}}_k=0.\end{equation}
\end{rem}

\subsection{$F$ is minimal in some hypersphere of $\mathbb{R}^7$}
In such cases, the immersions $F$
have parallel mean curvature vector $\mathbf{h}_F$ in the normal bundle of $F$ in $\mathbb{R}^7$ and, simultaneously, are pseudo-umbilical, that is, $\mathbf{I\!I}_F^{\mathbf{h}_F}=\lambda \mathbf{I}_F$ for some smooth function $\lambda$ on $\Sigma$. As a matter of fact, for a general dimension, we have:
\begin{prop}\cite{YC}
  $M^n$ is a pseudo-umbilical submanifold of $\mathbb{R}^m$ such that the mean curvature vector $\mathbf{h}$ is parallel in the normal bundle if, and only if, $M^n$ is either a minimal submanifold of $\mathbb{R}^m$ or a minimal submanifold of a hypersphere of $\mathbb{R}^m$.
\end{prop}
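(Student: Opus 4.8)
The plan is to prove both implications by elementary submanifold geometry, the only non-routine ingredient being an auxiliary map. Throughout I would write $x$ for the position vector of $M$ in $\mathbb{R}^m$, $D$ for the flat connection of $\mathbb{R}^m$, $\nabla^\perp$ for the normal connection of $M$, and $A_N$ for the shape operator in a normal direction $N$, and I would use the Gauss and Weingarten formulae $D_XY=\nabla_XY+\mathbf{I\!I}(X,Y)$, $D_XN=-A_NX+\nabla^\perp_XN$ together with $\langle A_NX,Y\rangle=\langle\mathbf{I\!I}(X,Y),N\rangle$. I would also use that a hypersphere $S\subset\mathbb{R}^m$ of radius $r$ about a point $p$ is totally umbilical with $\mathbf{I\!I}_S(X,Y)=-r^{-2}\langle X,Y\rangle(x-p)$, so that its mean curvature vector in $\mathbb{R}^m$ is the normal field $-r^{-2}(x-p)$, together with the ``tower'' relation $\mathbf{I\!I}^{\mathbb{R}^m}_M=\mathbf{I\!I}^{S}_M+\mathbf{I\!I}_S|_{TM}$ for a nested triple $M\subset S\subset\mathbb{R}^m$.

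First I would dispose of the ``if'' direction. If $M$ is minimal in $\mathbb{R}^m$, then $\mathbf{h}\equiv 0$ is trivially parallel and $\mathbf{I\!I}^{\mathbf{h}}\equiv 0=0\cdot\mathbf{I}$. If $M$ is minimal in a hypersphere $S$ as above, I would trace the tower relation over an orthonormal frame of $TM$: using $\mathbf{h}^S_M=0$ this gives $\mathbf{h}_M=-r^{-2}(x-p)$, a normal field of constant length $1/r$ whose derivative $D_X\mathbf{h}_M=-r^{-2}X$ is tangent, so $\nabla^\perp\mathbf{h}_M=0$; and pairing $\mathbf{I\!I}^{\mathbb{R}^m}_M(X,Y)=\mathbf{I\!I}^S_M(X,Y)+\mathbf{I\!I}_S(X,Y)$ with $\mathbf{h}_M$ annihilates the $TS$-valued first term and leaves $r^{-2}\langle X,Y\rangle$, so $\mathbf{I\!I}^{\mathbf{h}_M}_M=r^{-2}\mathbf{I}$, i.e. $M$ is pseudo-umbilical.

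For the converse I would start from $\nabla^\perp\mathbf{h}=0$, which gives $X|\mathbf{h}|^2=2\langle\nabla^\perp_X\mathbf{h},\mathbf{h}\rangle=0$, so $|\mathbf{h}|$ is constant on the connected manifold $M$; if it is zero, $M$ is minimal in $\mathbb{R}^m$, so assume $|\mathbf{h}|^2=c>0$. Pseudo-umbilicity means $\langle A_{\mathbf{h}}X,Y\rangle=\lambda\langle X,Y\rangle$; tracing and using $\mathrm{tr}\,A_{\mathbf{h}}=n\langle\mathbf{h},\mathbf{h}\rangle=nc$ forces $\lambda\equiv c$, hence $A_{\mathbf{h}}=c\,\mathrm{Id}$ exactly. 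Then the map $f:=x+c^{-1}\mathbf{h}$ has $D_Xf=X+c^{-1}(-A_{\mathbf{h}}X+\nabla^\perp_X\mathbf{h})=X-X=0$, so $f\equiv p$ is a constant, $x-p=-c^{-1}\mathbf{h}$ has constant length $r:=c^{-1/2}$, and $M$ lies in the hypersphere $S$ of radius $r$ about $p$ with $\mathbf{h}_M=-r^{-2}(x-p)$ normal to $S$. Feeding this into the tower relation as in the first direction gives $\mathbf{h}^S_M=\mathbf{h}_M+r^{-2}(x-p)=0$, so $M$ is minimal in $S$.

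I do not expect a serious obstacle: once the structure equations are in place the whole argument is a few lines. The single step doing the real work is the observation that pseudo-umbilicity together with $\nabla^\perp\mathbf{h}=0$ pins $A_{\mathbf{h}}$ to the precise multiple $|\mathbf{h}|^2\,\mathrm{Id}$ of the identity, which is exactly what makes the differential of $f=x+|\mathbf{h}|^{-2}\mathbf{h}$ vanish identically; everything else is formal. The one hypothesis to watch is connectedness of $M$ (implicit in ``submanifold''), needed so that constancy of $|\mathbf{h}|$ really confines $M$ to one hypersphere rather than to a family of concentric ones.
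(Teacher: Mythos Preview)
Your argument is correct and is essentially the classical proof of this result. However, the paper does not give its own proof of this proposition: it is stated with the citation \cite{YC} (Yano--Chen) and used as a quoted result, with no argument supplied. So there is nothing in the paper to compare your proof against; you have simply reproduced the standard Yano--Chen argument (constancy of $|\mathbf{h}|$ from $\nabla^\perp\mathbf{h}=0$, the trace computation forcing $A_{\mathbf{h}}=|\mathbf{h}|^2\,\mathrm{Id}$, and the vanishing differential of $x+|\mathbf{h}|^{-2}\mathbf{h}$), which is exactly what appears in the cited reference.
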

%We will also characterize those harmonic maps $\varphi:\Sigma\to S^6$ producing surfaces $F:\Sigma\to \mathbf{R}^7$ which either \emph{a)} have  parallel mean curvature vector but are not  pseudo-umbilical or  \emph{b)}
%  are  pseudo-umbilical but do not
% have  parallel mean curvature vector.

%Surfaces with parallel mean curvature vector in Euclidean spaces were classified by B-Y. Chen \cite{Chen1} and S.T. Yau \cite{Ya}.
%\begin{thm}\cite{Chen1,Ya}\label{cy}
%  Let $\Sigma$ be a surface isometrically immersed in  $\mathbb{R}^m$ with parallel mean curvature vector. Then $M$ is one of the following surfaces:
%minimal surfaces of $\mathbb{R}^m$;
%minimal surfaces of a hypersphere of $\mathbb{R}^m$;
%surfaces in a hypersphere of an affine $4$-space of $\mathbb{R}^m$;
% surfaces in an affine $3$-space of $\mathbb{R}^m$.
%\end{thm}

%
%   A standard argument shows the following.
%\begin{prop}
%  Let $\varphi:\Sigma\to S^6$ be a conformal harmonic immersion and $F:\Sigma\to\mathbb{R}^7$ such that $dF=\varphi\times *d\varphi$.  $F$ is pseudo-umbilical with $\mathbf{h}_F$ parallel if and only if, up to an additive constant, $F(\Sigma)\subset S^6$ and  $F:\Sigma\to S^6$ is a conformal harmonic immersion. Moreover, $F=\mathbf{h}_F$.
%  \end{prop}

\begin{eg}\label{exalm}
Let $\varphi:\Sigma\to S^6$ be an almost complex curve, meaning that $i\varphi_z=\varphi\times \varphi_z$. If $F$ is such that  $dF=\varphi\times *d\varphi$, we have $F_z=i\varphi\times \varphi_z$. Hence $F_z=-\varphi_z$, which means that, up to an additive constant, $F=-\varphi$. In this case, it is clear that $F$ is minimal in $S^6$.
\end{eg}

\begin{thm}\label{minimal}
     Let $\varphi:\Sigma\to S^6$ be a conformal harmonic immersion and $F:\Sigma\to\mathbb{R}^7$  such that $\mathrm{d}F=\varphi\times *\mathrm{d}\varphi$.  $F$ is minimal in some hypersphere of $\mathbb{R}^7$ if, and only if, up to change of orientation of $\Sigma$, one of the following statements holds:
\begin{enumerate}\item[1)] $\varphi$ is an almost complex curve; \item[2)] there exists a unit vector $\mathbf{v}$, an angle $\beta$ and an almost complex curve $\tilde{\varphi}$ of type (III), with $\tilde{\varphi}(\Sigma)\subset S^5=S^6\cap\mathbf{v}^\perp$, such that $\varphi=R_\mathbf{v}(\beta)\tilde\varphi$, where $R_\mathbf{v}(\beta)$ is the rotation given by \eqref{rotation}; \item[3)] $\varphi$ is totally geodesic. \end{enumerate}   %$\varphi\times \mathbf{h}_F$ is a constant vector $\mathbf{v}$. In this
   \end{thm}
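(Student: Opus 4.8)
The plan is to work entirely within the harmonic sequence $\{f_j\}$ of $\varphi:\Sigma\to S^6$, using the characterization recalled in the preliminaries (the Yau--Chen proposition, together with \eqref{mean}, \eqref{huu}, and Lemma \ref{vv}). Since $F$ minimal in a hypersphere is equivalent to $\mathbf{h}_F$ being parallel in $TF^\perp$ \emph{and} $F$ pseudo-umbilical, I would split the condition into these two pieces and translate each into a relation among the $f_j$'s.

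First I would handle pseudo-umbilicity. Using \eqref{sff} with $N=\mathbf{h}_F=if_1\times f_{-1}$ and the expression \eqref{huu} for the ellipse of curvature, the pseudo-umbilical condition $\mathbf{I\!I}_F^{\mathbf{h}_F}=\lambda\mathbf{I}_F$ forces the two non-constant vectors $P^\perp_{TF}(\varphi\times\varphi_{xy})$ and $P^\perp_{TF}(\varphi\times\varphi_{xx})$ to be orthogonal to $\mathbf{h}_F$. Rewriting $\varphi\times\varphi_{xx}$ and $\varphi\times\varphi_{xy}$ in terms of $f_0,f_1,f_{-1},f_2,f_{-2}$ via \eqref{harmonicsequence1}, and pairing with $\mathbf{h}_F=if_1\times f_{-1}$, the key quantities that appear are inner products of the form $(f_1\times f_{-1})\cdot(f_0\times f_2)$ and $(f_1\times f_{-1})\cdot(f_0\times f_{-2})$, which I would simplify using (P4), (P6), (P7) and the normalizations $f_1\cdot f_{-1}=-1$, $|f_j||f_{-j}|=1$. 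By Lemma \ref{KCCE}(3) the K\"ahler angle constancy is governed by $f_0\times f_1\perp_h\overline\varphi_2$, so these computations should show that pseudo-umbilicity of $F$ forces $\theta$ to be constant. Simultaneously, I would compute $\nabla^\perp\mathbf{h}_F$ from \eqref{mean}: differentiating $if_1\times f_{-1}$ in $z$ and projecting onto $TF^\perp$, the parallel condition again produces conditions on how $f_2$ and $f_{-2}$ couple with $f_1\times f_{-1}$. The expectation is that both conditions together pin down that the ellipse of curvature of $\varphi$ is a circle (i.e. $\varphi_2$ isotropic, Lemma \ref{KCCE}(1)) or a point ($\varphi_2=\{0\}$, Lemma \ref{KCCE}(2)).

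Once $\varphi$ has constant K\"ahler angle and its ellipse of curvature is a circle or a point, I would branch. If $\theta=0$, then $\varphi$ is an almost complex curve and Example \ref{exalm} gives $F=-\varphi$ (up to translation), which is minimal in $S^6$ — case 1). If the ellipse of curvature is a point, $\varphi_2=\{0\}$ means $\varphi$ is superminimal of small length; combined with $\theta=0$ this again puts $\varphi$ among almost complex curves or, in the degenerate subcase, forces $\varphi$ totally geodesic — case 3). If $\theta\neq 0,\pi$ and the ellipse of curvature is a circle, Theorem \ref{bvwp} applies directly: there is a unit vector $\mathbf{v}$, an angle $\beta$, and an almost complex curve $\tilde\varphi$ of type (III) with $\varphi=R_\mathbf{v}(\beta)\tilde\varphi$ — case 2). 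The case $\theta=\pi$ reduces to $\theta=0$ after a change of orientation of $\Sigma$ (which swaps $f_1\leftrightarrow f_{-1}$ and flips the sign of $\cos\theta$ in \eqref{kangle}), which is why the statement allows reorientation. For the converse I would check directly that in each of the three cases $F$ is indeed minimal in a hypersphere: case 1) is Example \ref{exalm}; case 3) is immediate since a totally geodesic $\varphi$ gives $F$ totally geodesic in a hyperplane or a great sphere; case 2) requires substituting $\varphi=R_\mathbf{v}(\beta)\tilde\varphi$ into $dF=\varphi\times*d\varphi$ and using the explicit form \eqref{rotation} together with the type (III) structure of $\tilde\varphi$ to see that $F$ lands in a hypersphere with constant mean curvature vector of length $1$ and vanishing $\mathbf{I\!I}_F^{\mathbf{h}_F}$-traceless part.

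The main obstacle I anticipate is the bookkeeping in the first step: expressing $\varphi_{xx}$, $\varphi_{xy}$, $\varphi\times\varphi_{xx}$, $\varphi\times\varphi_{xy}$ and $\partial_z(f_1\times f_{-1})$ in terms of $f_{-2},f_{-1},f_0,f_1,f_2$ and then reducing the resulting triple cross products with (P5)--(P7) to scalar conditions is delicate, and one must be careful that the meromorphic sections $f_j$ may have zeros (so identities should first be derived where $f_j\neq 0$ and then extended by continuity, exactly as in the construction of the harmonic sequence). A secondary subtlety is ensuring that "pseudo-umbilical \emph{and} $\mathbf{h}_F$ parallel" is genuinely stronger than either alone: one should verify that there is no conformal harmonic $\varphi$ for which $F$ has parallel $\mathbf{h}_F$ but is not pseudo-umbilical that accidentally slips through — this is handled by keeping both scalar conditions and showing their conjunction is equivalent to ($\varphi_2$ isotropic or zero) together with constant $\theta$.
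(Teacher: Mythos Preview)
Your overall architecture mirrors the paper's: translate both conditions (pseudo-umbilical and $\mathbf{h}_F$ parallel) into relations among the $f_j$'s, then invoke Theorem~\ref{bvwp}. However, there is a genuine gap. You assert that ``pseudo-umbilicity of $F$ forces $\theta$ to be constant'', and then plan to extract the isotropy of $\varphi_2$ from the conjunction. The first claim is false. Take the Clifford torus $\varphi$ in $S^3=S^6\cap\mathrm{span}\{\mathbf{e}_1,\mathbf{e}_2,\mathbf{e}_3,\mathbf{e}_4\}$ from the example following Theorem~\ref{psnp}: there $F$ is pseudo-umbilical (with non-parallel $\mathbf{h}_F$), yet a direct computation gives $\cos\theta=e^{-2\alpha}\varphi\cdot(\varphi_x\times\varphi_y)=-\tfrac{1}{\sqrt{2}}\sin y$, which is not constant. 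So pseudo-umbilicity alone cannot deliver constant K\"ahler angle, and your decomposition of the work between the two hypotheses does not go through as stated.

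The paper's organization avoids this by not trying to separate the two conditions. It shows (Lemma~\ref{psc}) that pseudo-umbilicity is equivalent to $((f_2\times f_{-1})\times f_1)\cdot f_0=0$, and (Lemma~\ref{pmc}) that $\mathbf{h}_F$ parallel is equivalent to $f_2\times f_{-1}\in\mathrm{span}\{f_0\times f_{-1}\}$. The conjunction then collapses to the single criterion $f_2\times f_{-1}=0$, which is in turn equivalent to $\varphi\times\mathbf{h}_F$ being a \emph{constant} vector $\mathbf{v}$ (Lemma~\ref{f2f1}). From $f_2\times f_{-1}=0$ both consequences fall out at once: Lemma~\ref{lemax} gives $f_2\cdot f_2=0$ (so $\varphi_2$ is isotropic or zero), and constancy of $\mathbf{v}=\varphi\times\mathbf{h}_F$ forces $\cos\theta=\varphi\cdot\mathbf{h}_F$ constant. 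The branching is then on $\mathbf{v}$ rather than directly on $\theta$: $\mathbf{v}=0$ gives $\varphi=\pm\mathbf{h}_F$, hence an almost complex curve up to orientation; $\mathbf{v}\neq 0$ with $\varphi_2\neq\{0\}$ feeds into Theorem~\ref{bvwp}; $\mathbf{v}\neq 0$ with $\varphi_2=\{0\}$ gives $\varphi$ totally geodesic in $S^2=S^6\cap\mathbf{E}$ for a non-associative $3$-plane $\mathbf{E}$. Your handling of this last case (``combined with $\theta=0$'' after already having split off $\theta=0$) is tangled for exactly this reason.

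For the converse in case~2), the paper again exploits the criterion $f_2\times f_{-1}=0$: since $\tilde\varphi$ is almost complex it satisfies $\tilde f_2\times\tilde f_{-1}=0$, and one checks directly from \eqref{rotation} that $R_{\mathbf v}(\beta)$ preserves this relation, so $f_2\times f_{-1}=0$ as well. This is considerably lighter than substituting $\varphi=R_{\mathbf v}(\beta)\tilde\varphi$ into $dF=\varphi\times *d\varphi$ and analyzing the resulting surface directly, as you propose.
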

\begin{proof}Since $\mathbf{h}_F\neq 0$, $F$ cannot  be minimal in $\mathbb{R}^7$.
We start the proof with the following lemmas.
  \begin{lem}\label{psc}
  $F$ is pseudo-umbilical if, and only if,
  \begin{equation}\label{ps2}\big((f_2\times f_{-1})\times f_{1}\big)\cdot f_0=0.\end{equation}
  \end{lem}
  \begin{proof}
In view of \eqref{fff} and \eqref{sff}, $F$ is pseudo-umbilical if, and only if,
  \begin{equation}\label{lem1}
  (\varphi\times \varphi_{yy})\cdot \mathbf{h}_F=0,\quad(\varphi\times \varphi_{xy})\cdot \mathbf{h}_F=0.\end{equation}
Since   $\mathbf{h}_F$ is a vector field of $TF^\perp$ and $TF\otimes \mathbb{C}=(\varphi\times \varphi_1)\oplus( \varphi\times \varphi_{-1})$, we have
$$(f_0\times f_2)\cdot \mathbf{h}_F=(f_0\times P_{\varphi_1}^\perp {f_0}_{zz})\cdot \mathbf{h}_F=(f_0\times {f_0}_{zz})\cdot \mathbf{h}_F.$$
Equating the real and the imaginary parts, and taking into account that $\triangle \varphi$ is collinear with $\varphi$, we conclude that $(f_0\times f_2)\cdot \mathbf{h}_F=0$ if, and only if, equations \eqref{lem1} hold.

Making use of properties (P3), (P4) and (P7), we see that, since $\mathbf{h}_F$ is given by \eqref{mean},
\begin{align*}(f_0\times f_2)\cdot \mathbf{h}_F=i\big((f_2\times f_{-1})\times f_{1} -2 (f_2\cdot f_1)f_{-1}+(f_2\cdot f_{-1})f_1+(f_{-1}\cdot f_1)f_2 \big)\cdot f_0.
\end{align*}
Now, since $\varphi$ is conformal, by Lemma \ref{r=3} it has isotropy order $r\geq 3$, which means, in particular, that $f_0\cdot f_2=0$.
By definition of harmonic sequence, we also have $f_0\cdot f_1=f_0\cdot f_{-1}=0$.
Hence,  $F$ is pseudo-umbilical if, and only if,
$\big((f_2\times f_{-1})\times f_{1}\big)\cdot f_0=0.$
\end{proof}
\begin{lem}\label{pmc}
  $F$ has parallel mean curvature vector in $TF^\perp$ if, and only if,  $f_2\times f_{-1}=a f_0\times f_{-1}$ for some complex function $a$ on $\Sigma$.
\end{lem}
\begin{proof}
  The mean curvature vector field $\mathbf{h}_F$ is parallel in the normal bundle $TF^\perp$ if, and only if, $\frac{\partial\mathbf{h}_F}{\partial z}$ is a vector field of $TF\otimes \mathbb{C}=(\varphi\times\varphi_1)\oplus(\varphi\times\varphi_{-1})$. Taking the $z$-derivative of  $\mathbf{h}_F=if_1\times f_{-1}$, we obtain, in view of \eqref{harmonicsequence1},
  \begin{equation*}\label{derh}
  \frac{\partial\mathbf{h}_F}{\partial  z}=i(f_2\times f_{-1}+f_1\times f_{0}).
  \end{equation*}
  Hence  $\frac{\partial\mathbf{h}_F}{\partial z}$ is a  vector field of $TF\otimes \mathbb{C}$ if, and only if, $f_2\times f_{-1}= af_0\times f_{-1}+bf_0\times f_{1}$ for some complex functions $a$ and $b$ on $\Sigma$. But
  $$(f_2\times f_{-1})\cdot (f_0\times f_{-1})=-f_0\cdot (f_{-1}\times ( f_{-1}\times f_2))=f_0\cdot\big((f_{-1}\cdot f_{-1})f_2-(f_{-1}\cdot  f_2)f_{-1}  \big)=0;$$
hence the component $b$ of $f_2\times f_{-1}$ along the isotropic section $f_0\times f_{1}$ vanishes.
\end{proof}

\begin{lem}\label{f2f1}
The following statements are equivalent:
 $F$ is minimal in some hypersphere of $\mathbb{R}^7$, that is, $F$ is pseudo-umbilical with parallel mean curvature vector;
$f_2\times f_{-1}=0$. In this case, $\varphi$ has constant K\"{a}hler angle.
\end{lem}
\begin{proof}Assume that $F$ is pseudo-umbilical with parallel mean curvature vector, that is,  condition \eqref{ps2} holds and $f_2\times f_{-1}=a f_0\times f_{-1}$,  for some complex function $a$.
Then, since $$(f_2\times f_{-1})\cdot (f_0\times f_{1})=-\big((f_2\times f_{-1})\times f_{1}\big)\cdot f_0=0,$$  we also have $a=0$. Hence,  $f_2\times f_{-1}=0$.
Conversely, if $f_2\times f_{-1}=0$,  $F$ is pseudo-umbilical by Lemma \ref{psc}, and  $\mathbf{h}_F$ is parallel by Lemma \ref{pmc}.

Suppose that $f_2\times f_{-1}=0$. Taking into account \eqref{harmonicsequence1} and \eqref{pipi},  we have
     \begin{equation}\label{bla}
     (\varphi\cdot \mathbf{h}_F)_z= i\big(f_0\cdot (f_1\times f_{-1})\big)_z=if_0\cdot(f_2\times f_{-1})=0,
     \end{equation}
that is, $\varphi$ has constant K\"{a}hler angle.

    % Assume that $\varphi\times \mathbf{h}_F$ is a constant vector $\mathbf{v}$, that is,
%     $(\varphi\times \mathbf{h}_F)_z=0$. Since $f_0$ is real, this implies that $f_2\times f_{-1}=a f_0$ for some complex function $a$ on $\Sigma$. However, since $f_{-1}$ is isotropic and $f_2\cdot f_{-1}=0$, because $f_1$ and $f_2$ are consecutive in the harmonic sequence of $f_0$, we have
%    $$(f_2\times f_{-1})\cdot (f_2\times f_{-1})=(f_2\cdot f_2)(f_{-1}\cdot f_{-1})- (f_2\cdot f_{-1})^2=0,$$
%which means that $f_2\times f_{-1}$ is isotropic; hence $a= 0$ and   $f_2\times f_{-1}=0$. Conversely, if  $f_2\times f_{-1}=0$,  from \eqref{bla} we conclude that $\varphi\times \mathbf{h}_F$ is constant.
\end{proof}

Observe that  the condition $f_2\times f_{-1}=0$ implies, by Lemma  \ref{lemax}, that $f_{2}\cdot f_2=0$, meaning, by Lemma \ref{KCCE}, that the ellipse of curvature  $\mathcal{E}_\varphi$ of $\varphi$ is a point or a circle.

Now, assume that $F$ is minimal in some hypersphere of $\mathbb{R}^7$  and let $\theta$ be its (constant) K\"{a}hler angle. If $\theta=0$, then $\varphi=\pm \mathbf{h}_F$. Consequently,
 $$f_0\times f_1=\pm i (f_1\times f_{-1})\times f_1  = \mp i f_1\times(f_1\times f_{-1})=\mp i(f_1\cdot f_{-1}) f_1=\pm if_1,$$ that is, up to change of orientation, $\varphi$ is an almost complex curve. Next we  assume $\theta\neq 0,\pi$.  We have two possibilities: either
 $\mathcal{E}_\varphi$ is a point or $\mathcal{E}_\varphi$  is a circle.
If $\mathcal{E}_\varphi$ is a point, then $\varphi_2=\{0\}$, which means that $\varphi$ is superminimal in $S^2=S^6\cap \mathbf{E}$, where  $\mathbf{E}\otimes \C:=\varphi\oplus\varphi_{-1}\oplus \varphi_1$ is a (constant) non-associative $3$-plane.   If $\mathcal{E}_\varphi$ is a circle, then, by Theorem \ref{bvwp}, $\varphi=R_\mathbf{v}(\beta)\tilde\varphi$ for some almost complex curve $\tilde{\varphi}$ of type (III) and unit vector $\mathbf{v}$, with $\tilde\varphi(\Sigma)\subset S^6\cap \mathbf{v}^\perp$.

Reciprocally, if $\varphi$ is an almost complex curve, we have, up to translation, $F=-\varphi$, as seen in Example \ref{exalm}, and, consequently, $F$ is minimal in $S^6$. If the conformal harmonic map $\varphi$ is totally geodesic, we have $f_2=0$; then $f_{-1}\times f_2=0$, and, in view of Lemma \ref{f2f1},  $F$ is minimal in some hypersphere. Finally, assume that $\varphi=R_\mathbf{v}(\beta)\tilde\varphi$ for some almost complex curve $\tilde{\varphi}$ of type (III), with $\tilde\varphi(\Sigma)\subset S^6\cap \mathbf{v}^\perp$. Since $R_\mathbf{v}(\beta)\in SO(7)$, we also have $\varphi_i=R_\mathbf{v}(\beta)\tilde \varphi_i$.
 Being an almost complex curve, $\tilde{\varphi}$ certainly satisfies $\tilde \varphi_{-1}\times \tilde \varphi_2=0$.  By applying the properties of $\times$ and the definition of $R_\mathbf{v}(\beta)$, we see now that
$\varphi_{-1}\times \varphi_2=R_\mathbf{v}(\beta)\tilde \varphi_{-1}\times R_\mathbf{v}(\beta)\tilde \varphi_2=0$, which means, by Lemma \ref{f2f1}, that $F$ is minimal in some hypersphere of $\mathbb{R}^7$ .\end{proof}

\begin{eg}\label{trex}
  Consider the conformal harmonic immersion $\varphi:\Sigma\to S^6\cap \mathbf{e}_4^\perp$
 of the form  \eqref{flat} with $\mathbf{v}_k=\frac{1}{2\sqrt{3}} \exp(\pi i/6)(\mathbf{e}_{k}+i\mathbf{e}_{k+4})$ for $k=1,2,3$,
$\mu_1=1$, $\mu_2=\exp(2\pi i/3)$, and $\mu_3=\exp(4\pi i/3)$.
The numbers $\mu_i$ and the vectors $\mathbf{v}_j$ satisfy \eqref{condflat}.
The harmonic map $\varphi$ has constant K\"{a}hler angle $\theta=\pi/2$ and $\varphi_2$ is an isotropic line bundle. Then, by Theorem \ref{bvwp}, $\varphi$ is $SO(7)$-congruent with some almost complex curve. The associated surface $F$ is minimal in a hypersphere of $\mathbb{R}^7$, by Theorem \ref{minimal}, and $F$ has constant Gauss curvature $K_F=0$, by Remark \ref{CGC}. Up to translation,   $F$
is given by
\begin{equation}\label{FVV}F(z)=\sum_{k=1}^3\tilde{\mathbf{v}}_ke^{\mu_k z-\overline{\mu}_k\bar z}+\overline{\tilde{\mathbf{v}}}_ke^{-\mu_kz+\overline{\mu}_k\bar z},\end{equation}
with  $\tilde{\mathbf{v}}_k=\frac{1}{2\sqrt{3}} \exp(2\pi i/3)(\mathbf{e}_{k}+i\mathbf{e}_{k+4})$ for $k=1,2,3$.
\end{eg}

\subsection{$F$ has parallel mean curvature}
\begin{thm}\label{pmcv}
 Let $\varphi:\Sigma\to S^6$ be a conformal harmonic immersion. $F:\Sigma\to\mathbb{R}^7$ has parallel mean curvature vector field and it  is not pseudo-umbilical if, and only if, $\varphi$ is superconformal in some $3$-dimensional sphere $S^3= S^6\cap W$, where $W$ is a coassociative $4$-space.
\end{thm}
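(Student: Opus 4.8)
The plan is to rephrase the two conditions on $F$ as a single identity in the harmonic sequence of $\varphi$ and then to read off the geometry of $\varphi$ from the $G_{2}$-structure of $\R^{7}$. By Lemma \ref{pmc}, $\mathbf{h}_{F}$ is parallel if and only if $f_{2}\times f_{-1}=a\,f_{0}\times f_{-1}$ for some complex function $a$; substituting this into the pseudo-umbilicity criterion \eqref{ps2} of Lemma \ref{psc} and using (P1)--(P7) together with \eqref{pipi} (this is the identity $\big((f_{0}\times f_{-1})\times f_{1}\big)\cdot f_{0}=1$ from the proof of Lemma \ref{f2f1}) gives $\big((f_{2}\times f_{-1})\times f_{1}\big)\cdot f_{0}=a$, so $F$ is pseudo-umbilical if and only if $a\equiv0$. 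Hence it suffices to show that the condition ``$f_{2}\times f_{-1}=a\,f_{0}\times f_{-1}$ with $a\not\equiv0$'' characterizes exactly those $\varphi$ which are superconformal in some $S^{3}=S^{6}\cap W$ with $W$ coassociative. Throughout I work on the dense open set $\Omega=\{a\neq0\}$ ($a$ being real-analytic).

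On $\Omega$, applying Lemma \ref{lemax} to $(f_{2}-af_{0})\times f_{-1}=0$ (recall $f_{-1}\neq0$ is isotropic) shows that $f_{2}-af_{0}$ is isotropic, so $f_{2}\cdot f_{2}=-a^{2}\neq0$ (using $f_{0}\cdot f_{2}=0$ and $|f_{0}|^{2}=1$); in particular $f_{2}\neq0$ and, by Lemma \ref{KCCE}, the ellipse of curvature of $\varphi$ is a non-degenerate, non-circular ellipse. Moreover, since $\mathbf{h}_{F}$ is parallel, $(\mathbf{h}_{F})_{z}$ is a section of $TF\otimes\C$, and $f_{0}\cdot(f_{0}\times\cdot)=0$ by (P1), so $f_{0}\cdot(\mathbf{h}_{F})_{z}=0$; with $f_{1}\cdot\mathbf{h}_{F}=0$ (again (P1)) this gives $\partial_{z}(\varphi\cdot\mathbf{h}_{F})=0$, and since $\varphi\cdot\mathbf{h}_{F}=\cos\theta$ the K\"{a}hler angle $\theta$ is constant, with $\theta\neq0,\pi$ (otherwise $\varphi=\pm\mathbf{h}_{F}$ is an almost complex curve and $f_{2}\times f_{-1}=0$, against $a\neq0$). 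I would then pass to the orthonormal moving frame $\{\varphi,\xi_{1},\xi_{2},\mathbf{m},\mathbf{n}\}$ with $\xi_{1}=e^{-\alpha}\varphi_{x}$, $\xi_{2}=e^{-\alpha}\varphi_{y}$ (so $f_{1}=\tfrac12 e^{\alpha}(\xi_{1}-i\xi_{2})$, $f_{-1}=-e^{-\alpha}(\xi_{1}+i\xi_{2})$, $\mathbf{h}_{F}=\xi_{1}\times\xi_{2}$), $\mathbf{m}=\tfrac{1}{\sin\theta}(\mathbf{h}_{F}-\cos\theta\,\varphi)$, $\mathbf{n}=\varphi\times\mathbf{m}$; by transitivity of the $G_{2}$-action this completes locally to a $G_{2}$-frame $\mathbf{e}_{1},\dots,\mathbf{e}_{7}$ with table \eqref{table}, in which $\xi_{1}=\mathbf{e}_{1}$, $\xi_{2}=\mathbf{e}_{2}$, $\varphi=\cos\theta\,\mathbf{e}_{3}+\sin\theta\,\mathbf{e}_{4}$, $\mathbf{m}=\sin\theta\,\mathbf{e}_{3}-\cos\theta\,\mathbf{e}_{4}$, $\mathbf{n}=-\mathbf{e}_{7}$. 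Since $f_{-1}$ is isotropic, \eqref{table} gives the $3$-plane $\ker(\,\cdot\times f_{-1})=\mathrm{span}_{\C}\{\mathbf{e}_{1}+i\mathbf{e}_{2},\ \mathbf{e}_{5}+i\mathbf{e}_{6},\ \mathbf{e}_{7}+i\mathbf{e}_{4}\}$; and $f_{2}$ is $\C$-orthogonal to $f_{0},f_{1},f_{-1}$, so $f_{2}-af_{0}$ lies in this kernel with vanishing $\mathbf{e}_{1},\mathbf{e}_{2}$ components, which together with $f_{2}\cdot f_{0}=0$ pins down
\[
f_{2}=\frac{a\cos\theta}{\sin\theta}\,\mathbf{m}-\frac{ia}{\sin\theta}\,\mathbf{n}+p\,(\mathbf{e}_{5}+i\mathbf{e}_{6})
\]
for a single unknown function $p$.

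The crux, which I expect to be the main technical obstacle, is now to differentiate $f_{2}\times f_{-1}=a\,f_{0}\times f_{-1}$ with respect to $\bar z$, using the harmonic sequence equations \eqref{harmonicsequence1} and $\overline{\varphi}_{j}=\varphi_{-j}$ (so $f_{-2}=\mu\,\overline{f_{2}}$ with $\mu\neq0$), to substitute the expression for $f_{2}$ just obtained, to expand every cross product via \eqref{table}, and to match coefficients in the frame $\{\varphi,\xi_{1},\xi_{2},\mathbf{m},\mathbf{n},\mathbf{e}_{5},\mathbf{e}_{6}\}$. The $\mathbf{n}$-component of the resulting identity takes the form $c\cos\theta=0$ with $c\neq0$, forcing $\theta=\pi/2$; plugging $\cos\theta=0$ back in, the $f_{-1}$-component then forces $p=0$ (and, incidentally, the $(\mathbf{e}_{5}+i\mathbf{e}_{6})$-component gives $a_{\bar z}=0$). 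Everything before this step is algebra dictated by (P1)--(P7); the step itself is a finite but delicate component-by-component comparison.

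Consequently $f_{2}=-ia\,\mathbf{n}$ on $\Omega$, so $\varphi_{2}=\C\mathbf{n}$ is a real line bundle on $\Omega$, hence on all of $\Sigma$ by density. As $\varphi$ is conformal its isotropy order is $\geq3$, so Lemma \ref{superconformal} with $m=3$ applies: $\varphi$ is superconformal in $S^{3}=S^{6}\cap W$, where $W\otimes\C=\varphi_{2}\oplus\varphi_{-1}\oplus\varphi_{0}\oplus\varphi_{1}=\mathrm{span}_{\C}\{\mathbf{n},\xi_{1},\xi_{2},\varphi\}$; since $\theta=\pi/2$ gives $\mathbf{m}=\mathbf{e}_{3}$, the complement is $W^{\perp}=\mathrm{span}\{\mathbf{e}_{3},\mathbf{e}_{5},\mathbf{e}_{6}\}$, which \eqref{table} shows to be closed under $\times$, so $W$ is coassociative. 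For the converse, suppose $\varphi$ is superconformal in $S^{3}=S^{6}\cap W$ with $W$ coassociative, and put $V=W^{\perp}$, an associative $3$-plane. Then $\varphi(\Sigma)\subset W$ and $T\varphi\subset W$, so by Lemma \ref{vxv}(1) $TF=\varphi\times T\varphi\subset W\times W=V$; hence $dF$ is $V$-valued and, up to translation, $F(\Sigma)\subset V$, a fixed $3$-plane. There $\mathbf{h}_{F}\in(W\times W)\otimes\C=V\otimes\C$ is a unit normal field to $F$, and since $|\mathbf{h}_{F}|\equiv1$ and $V$ is fixed we get $\nabla^{\perp}\mathbf{h}_{F}=0$, i.e.\ $F$ has parallel mean curvature in $\R^{7}$. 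Finally, if $F$ were pseudo-umbilical it would be umbilical inside $V\cong\R^{3}$, hence (since $|\mathbf{h}_{F}|=1$) an open piece of a unit sphere, so $K_{F}\equiv1$; by Remark \ref{CGC} this would make $\varphi$ totally geodesic, which is impossible for a map full in $S^{3}$. Hence $F$ is not pseudo-umbilical, and the proof is complete.
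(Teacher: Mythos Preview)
Your setup is sound: the equivalence ``pseudo-umbilical $\Leftrightarrow a=0$'' follows from (P4), (P7) and \eqref{pipi} as you indicate, the constant-K\"ahler-angle deduction is correct, and your $G_2$-frame expression for $f_2$ checks out. The converse is also fine, and your argument for non-pseudo-umbilicity (umbilical in $V\cong\R^3$ forces $K_F=1$, hence $\varphi$ totally geodesic by Remark~\ref{CGC}) is a nice geometric alternative to the paper's one-line appeal to $\varphi_2\times\varphi_{-1}\neq0$.

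The genuine gap is exactly where you flag it. You assert that the $\bar z$-derivative of $f_2\times f_{-1}=a\,f_0\times f_{-1}$, after substitution and component-matching, yields $c\cos\theta=0$ with $c\neq0$ and then $p=0$, but you do not perform the computation. Via \eqref{harmonicsequence1} this derivative produces terms $f_2\times f_{-2}$ and $f_0\times f_{-2}$; expressing $f_{-2}$ (a multiple of $\overline{f_2}$) back in your moving $G_2$-frame and extracting the $\mathbf{n}$- and $(\mathbf{e}_5+i\mathbf{e}_6)$-components is not routine, since the frame itself varies and $\mathbf{e}_5,\mathbf{e}_6$ are only defined up to a $U(1)$-rotation. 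Nothing you have written guarantees that a single differentiation closes the system, or that the ``$\mathbf{n}$-component'' isolates $\cos\theta$ as cleanly as claimed. Until this is actually carried out, the forward implication is unproven.

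The paper avoids this computation altogether by citing an external result: Lemma~4 of Chen \cite{Chen1} says that a surface with parallel mean curvature that is not minimal has vanishing normal curvature, which (via \cite{GR}) means $\mathcal{E}_F$ degenerates. From \eqref{huu} a degenerate $\mathcal{E}_F$ that is not a point forces $\varphi_2$ to be a \emph{real} line bundle directly, so Lemma~\ref{superconformal} immediately gives superconformality in $S^3=S^6\cap W$; coassociativity of $W$ then drops out of Lemma~\ref{lemax} and Lemma~\ref{vxv} applied to $(f_0-a^{-1}f_2)\times f_{-1}=0$. No moving $G_2$-frame, no K\"ahler-angle argument, and no differentiated identity is needed. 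If you want your route to stand on its own, you must either carry the frame computation through in full detail or supply an independent reason why $\varphi_2$ is real; Chen's theorem is precisely such a reason, and is what makes the paper's proof short.
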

\begin{proof}
If  $F$ has parallel mean curvature and it is not pseudo-umbilical, $F$ is not minimal neither in $\mathbb{R}^7$ nor in a hypersphere of $\mathbb{R}^7$.
Since $F$ is not minimal, the normal curvature of $F$ vanishes, by Lemma 4 of \cite{Chen1}. This is known \cite{GR} to be equivalent to the fact that the ellipse of curvature $\mathcal{E}_F$ of $F$ degenerates. Hence, either
$\mathcal{E}_F$ is a point or $\mathcal{E}_F$ is a line segment. But $\mathcal{E}_F$ cannot  be a point because, in that case, in view of \eqref{huu}, we would have $f_0\times f_2=0$, which means that $f_2=0$, and therefore $\varphi$ would be totally geodesic, that is,  $F$ would be minimal,  by Theorem \ref{minimal}. Then  $\mathcal{E}_F$ must be a line segment, which implies, in view of \eqref{huu}, that $\varphi_2$ is real. Consequently,  by Lemma \ref{superconformal},
 $\varphi$
 is superconformal in $S^3=S^6\cap W$, with $W\otimes \C=\varphi\oplus \varphi_{-1}\oplus \varphi_1\oplus \varphi_2$.
We also have, by Lemma \ref{pmc}, $f_0\times f_{-1}=a^{-1} f_2\times f_{-1}$ (with $a\neq 0$); equivalently, $(f_0-a^{-1}{f_2})\times f_{-1}=0$. Hence, by Lemma \ref{lemax}, $f_0-a^{-1}f_2$ is isotropic, which implies that $1+a^{-2}f_2\cdot f_2=0$. Then we  can define orthonormal real vector fields  $\mathbf{v}_1,\mathbf{v}_2,\mathbf{v}_3,\mathbf{v}_4$ by
$$\mathbf{v}_1=f_0,\quad \frac{\mathbf{v}_2+i\mathbf{v}_3}{\sqrt2}=\frac{f_{-1}}{|f_{-1}|},\quad i\mathbf{v}_4=a^{-1}{f_2}.$$
Set $W_1=\mathrm{span}\{\mathbf{v}_1,\mathbf{v}_2\}$ and $W_2=\mathrm{span}\{\mathbf{v}_3,\mathbf{v}_4\}$. Taking the real part of  $f_0\times f_{-1}=a^{-1} f_2\times f_{-1}$, we see that
$\mathbf{v}_1\times\mathbf{v}_2=\mathbf{v}_3\times\mathbf{v}_4$.
Hence, Lemma \ref{vxv} implies that $W^\perp$ is  an associative $3$-plane.

Reciprocally, again by Lemma \ref{vxv}, if $\varphi:\Sigma\to S^3=S^6\cap W$ is a superconformal harmonic map, with  $W^\perp$ an associative
 $3$-plane, $f_0\times f_{-1}$ is complex collinear with $f_2\times f_{-1}$. Consequently, $F:\Sigma\to\mathbb{R}^7$ has parallel mean curvature  and it is clear that $F$ cannot  be pseudo-umbilical (since $\varphi_2$ is real,  $\varphi_2\times \varphi_{-1}\neq 0$).
 \end{proof}
The parallel mean curvature surfaces $F$ arising from a superconformal harmonic map $\varphi$  in some $3$-dimensional sphere $S^3= S^6\cap W$, with $V=W^\perp$ an associative $3$-plane,
are constant mean curvature surfaces in $V$, up to translation, because $F$ satisfies $dF=\varphi\times *d\varphi$ and, by Lemma \ref{vxv}, $W\times W=V$.

\begin{eg}Let $W=\mathrm{span}\{\mathbf{e}_4,\mathbf{e}_5,\mathbf{e}_6,\mathbf{e}_7\}$ and $\varphi:\mathbb{C}\to S^3=S^6\cap W$ be defined by
 $\varphi(x,y)=\frac{1}{\sqrt{2}}\big(\cos x \,\mathbf{e}_4+\sin x\,\mathbf{e}_5+\cos y\,\mathbf{e}_6+\sin y \,\mathbf{e}_7 \big),$
 which is a superconformal harmonic map and parameterizes a Clifford torus. Taking into account the multiplication table \eqref{table}, one can check that the associated surface $F:\mathbb{C}\to V\subset \mathbb{R}^7$ is the cylinder given by
$F(x,y)=\frac{1}{2}\big(-(x+y)\,\mathbf{e}_1-\cos(x-y)\,\mathbf{e}_2+\sin(x-y)\,\mathbf{e}_3\big).$
 \end{eg}
%\begin{eg}
%Let $W=\mathrm{span}\{\mathbf{e}_1,\mathbf{e}_2,\mathbf{e}_3,\mathbf{e}_4\}$ and $\varphi:\mathbb{C}\to S^3=S^6\cap W$ be the ruled minimal surface \cite{Law} given by
%\begin{equation}\label{ruled}
%  \varphi(x,y)=\cos x\cos y\, \mathbf{e}_4 +\sin x\cos y\,\mathbf{e}_5+\cos x\sin y\,  \mathbf{e}_6+\sin x\sin y\,\mathbf{e}_7,
%\end{equation}
%which is superconformal. Integrating, we get
%$$F(x,y)=\sin 2y\,\mathbf{e}_1+x\,\mathbf{e}_2-\cos 2y\,\mathbf{e}_3.$$
%\end{eg}

\subsection{$F$ is pseudo-umbilical}
\begin{thm}\label{psnp}
Let $\varphi:\Sigma\to S^6$ be a conformal harmonic immersion. If $F$ is pseudo-umbilical with non-parallel mean curvature vector field, then  $\varphi$ belongs to one of the following classes of harmonic maps:
\begin{enumerate}
 \item[1)] $\varphi$ is full and superminimal in $S^4=S^6\cap W$,   for some   $5$-dimensional vector subspace $W$  of $\mathbb{R}^7$;
  \item[2)]  $\varphi$ has finite isotropy order $r=3$ and is full in $S^4=S^6\cap W$,   for some   $5$-dimensional vector subspace $W$  of $\mathbb{R}^7$;
  \item[3)] $\varphi$  is superconformal in $S^3=S^6\cap W$,  for some $4$-plane $W$ admitting a $\times$-compatible decomposition.  \end{enumerate}
      Conversely, if  $\varphi$  is a superconformal harmonic map in $S^3=S^6\cap W$,  for some $4$-plane $W$ admitting a $\times$-compatible decomposition, then $F$  is pseudo-umbilical with non-parallel mean curvature vector field.
\end{thm}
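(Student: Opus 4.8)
\emph{A preliminary reduction.} Write $\mathbf{n}:=f_2\times f_{-1}$. By Lemma~\ref{f2f1}, $F$ is minimal in a hypersphere of $\mathbb{R}^7$ precisely when $\mathbf{n}=0$, so in the situation of the theorem $\mathbf{n}\neq0$. From (P2) and $f_2\cdot f_{-1}=f_{-1}\cdot f_{-1}=0$, $\mathbf{n}$ is isotropic; and from the properties of $\times$ one checks $\mathbf{n}\cdot\mathbf{h}_F=0$ and (as in the proof of Lemma~\ref{pmc}) $\mathbf{n}\cdot(f_0\times f_{-1})=0$. By Lemma~\ref{psc} together with (P4), $F$ is pseudo-umbilical iff $\mathbf{n}\cdot(f_0\times f_1)=0$; combining with the previous identity, pseudo-umbilicity is equivalent to $\mathbf{n}\perp TF\otimes\mathbb{C}$. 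Since the bilinear extension of $\cdot$ to $TF\otimes\mathbb{C}=(\varphi\times\varphi_1)\oplus(\varphi\times\varphi_{-1})$ is non-degenerate (a short computation with (P2) and (P7)), the conditions $\mathbf{n}\neq0$ and $\mathbf{n}\perp TF\otimes\mathbb{C}$ force $\mathbf{n}\notin TF\otimes\mathbb{C}$, hence $\mathbf{h}_F$ is not parallel by Lemma~\ref{pmc}. Thus ``$F$ pseudo-umbilical with non-parallel mean curvature'' is the same as ``$\mathbf{n}\neq0$ and $\mathbf{n}\perp TF\otimes\mathbb{C}$''; in particular, for the converse statement it will suffice to show that $F$ is pseudo-umbilical and not minimal in a hypersphere.

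\emph{Forward direction: collapsing the osculating bundle.} Assume $\mathbf{n}\neq0$ and $\mathbf{n}\perp TF\otimes\mathbb{C}$. The heart of the matter is to prove that the second osculating bundle $\mathcal{O}:=\varphi_0\oplus\varphi_1\oplus\varphi_{-1}\oplus\varphi_2\oplus\varphi_{-2}$ is a \emph{parallel} subbundle of $\underline{\mathbb{C}}^7$; equivalently, that $\varphi_3\subset\mathcal{O}$ (since $\mathcal{O}$ is invariant under conjugation, $\varphi_{-3}\subset\mathcal{O}$ as well, and one iterates). I would establish this by differentiating, by means of \eqref{harmonicsequence1}, the identities $\mathbf{n}\cdot(f_0\times f_{\pm1})=0$, the identity $\mathbf{n}\cdot f_0=-i\,\partial_z\cos\theta$ (which follows from \eqref{kangle} and \eqref{mean}), and the isotropy relations coming from conformality ($f_0\cdot f_j=0$ for $j=1,2,3$, $f_1\cdot f_2=0$, $f_2\cdot f_{-1}=0$), and then using (P1)--(P7) to express $f_0\times f_3$ and $f_1\times f_2$ through elements of $\mathcal{O}$ and conclude $f_3\in\mathcal{O}$. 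Granting this, $\varphi$ takes values in the totally geodesic sphere $S^k=S^6\cap W$ with $W\otimes\mathbb{C}=\mathcal{O}$ and $k=\dim_\mathbb{R}W-1\le4$; moreover $k\ge3$, since $k\le2$ forces $\varphi_2=\{0\}$, i.e. $\varphi$ totally geodesic, hence $\mathbf{n}=0$, a contradiction.

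\emph{Forward direction: identifying the three classes.} If $k=4$, view $\varphi$ as a map into $S^4\subset\mathbb{C}P^4$; by Lemma~\ref{r=3} its isotropy order is odd, and it is $\le4$, so it equals $3$ (then $\varphi$ is full of isotropy order $3$ in $S^4$, which is class 2) or it is infinite (then $\varphi$ is superminimal in $S^4$, which is class 1). If $k=3$, then $\varphi_2=\varphi_{-2}$ is a real line bundle, so by Lemma~\ref{superconformal} $\varphi$ is superconformal in $S^3=S^6\cap W$ with $W\otimes\mathbb{C}=\varphi_0\oplus\varphi_1\oplus\varphi_{-1}\oplus\varphi_2$; and $W$ is not coassociative, for otherwise Theorem~\ref{pmcv} would make $\mathbf{h}_F$ parallel. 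Now choose the real orthonormal moving frame $\mathbf{v}_1,\dots,\mathbf{v}_4$ of $W$ adapted to the harmonic sequence --- $\mathbf{v}_1=f_0$, $f_{-1}=\tfrac{|f_{-1}|}{\sqrt2}(\mathbf{v}_2+i\mathbf{v}_3)$ (so that $f_1=-\tfrac{1}{\sqrt2\,|f_{-1}|}(\mathbf{v}_2-i\mathbf{v}_3)$ by \eqref{pipi}), and $\mathbf{v}_4$ spanning the real line bundle $\varphi_2$ --- ; a direct computation using (P3), (P4) and (P6) reduces the pseudo-umbilicity equation \eqref{ps2} to the scalar identity $(\mathbf{v}_1\times\mathbf{v}_2)\cdot(\mathbf{v}_3\times\mathbf{v}_4)=(\mathbf{v}_1\times\mathbf{v}_3)\cdot(\mathbf{v}_2\times\mathbf{v}_4)$. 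Using this identity, the non-coassociativity of $W$ (so that $\mathbf{v}_1\times\mathbf{v}_2$ and $\mathbf{v}_3\times\mathbf{v}_4$ are linearly independent, by Lemma~\ref{vxv}), and the transitivity of the $G_2$-action on associative $3$-planes together with the table \eqref{table}, one concludes that $(\mathbf{v}_1\times\mathbf{v}_2)\cdot(\mathbf{v}_3\times\mathbf{v}_4)=0$; thus $W$ admits a $\times$-compatible decomposition and $\varphi$ lies in class 3.

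\emph{Converse and the main obstacle.} Conversely, let $\varphi$ be superconformal in $S^3=S^6\cap W$ with $W$ admitting a $\times$-compatible decomposition. By Lemma~\ref{superconformal}, $\varphi_2$ is a real line bundle; take the adapted frame as above. By Lemma~\ref{all}, every decomposition of $W$ of the form \eqref{W} is $\times$-compatible, so $(\mathbf{v}_{\sigma(1)}\times\mathbf{v}_{\sigma(2)})\cdot(\mathbf{v}_{\sigma(3)}\times\mathbf{v}_{\sigma(4)})=0$ for every permutation $\sigma$; in particular both sides of the scalar identity above vanish, so $F$ is pseudo-umbilical by the computation of the previous paragraph. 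Moreover $\mathbf{n}=f_2\times f_{-1}$ is a nonzero multiple of $\mathbf{v}_4\times\mathbf{v}_2+i\,\mathbf{v}_4\times\mathbf{v}_3$, which is $\neq0$ because $|\mathbf{v}_4\times\mathbf{v}_2|=1$; hence $F$ is not minimal in a hypersphere, and by the preliminary reduction $\mathbf{h}_F$ is not parallel. The principal obstacle in this program is the analytic step of the forward direction --- showing that pseudo-umbilicity forces the osculating bundle $\mathcal{O}$ to be parallel --- after which the trichotomy is essentially bookkeeping on the isotropy order; a secondary delicate point is the last algebraic step in the case $k=3$, namely deducing $\times$-compatibility of $W$ from the scalar identity above and the non-coassociativity of $W$.
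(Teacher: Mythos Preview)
Your preliminary reduction and the converse are fine and essentially match the paper (the paper uses Theorem~\ref{pmcv} instead of your non-degeneracy argument for ``non-parallel'', but both work). The problem is the forward direction.

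\textbf{The osculating-bundle step is a genuine gap, and your sketch cannot close it.} You propose to show directly that $\varphi_3\subset\mathcal{O}$ by differentiating the pseudo-umbilicity identities. But differentiating $(f_0\times f_2)\cdot\mathbf{h}_F=0$ (once one checks that the side terms $(f_1\times f_2)\cdot\mathbf{h}_F$ and $(f_0\times f_2)\cdot\partial_z\mathbf{h}_F$ vanish) yields only $(f_0\times f_3)\cdot\mathbf{h}_F=0$, i.e.\ $f_3\perp\mathbf{v}$ where $\mathbf{v}:=\varphi\times\mathbf{h}_F$. Since $\mathbf{v}\in\mathcal{O}^\perp$ is a \emph{single} real direction while $\mathcal{O}^\perp$ is generically $2$-dimensional, this is not enough to force $f_3\in\mathcal{O}$. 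Indeed, pseudo-umbilicity alone cannot collapse $\mathcal{O}$: almost complex curves are pseudo-umbilical and full in $S^6$. Your hypothesis $\mathbf{n}\neq0$ is essential, but nothing in your differentiation scheme explains how it enters. The paper circumvents this entirely: it takes $W$ to be the full linear span of $\varphi$, observes (exactly as above) that $\mathbf{v}\perp W$, and then does a case analysis on $\dim W$. The cases $\dim W=7$ and $\dim W=6$ are eliminated by separate short arguments showing each forces $f_2\times f_{-1}=0$ (for $\dim W=7$ one gets $\mathbf{v}=0$ hence $\varphi=\pm\mathbf{h}_F$; for $\dim W=6$ one uses that $\mathbf{v}$ spans a constant line, differentiates, and finds $f_2\times f_{-1}$ is an isotropic multiple of the real $f_0$, hence zero). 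Only after this does one know $\dim W\le 5$, which is your ``$\mathcal{O}$ parallel''.

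\textbf{The $k=3$ algebra is also overcomplicated.} In the $4$-plane case the paper simply takes $W_1\otimes\mathbb{C}=\varphi_{-1}\oplus\varphi_1$ and $W_2\otimes\mathbb{C}=\varphi_0\oplus\varphi_2$ and notes that the pseudo-umbilicity equation $(f_0\times f_2)\cdot(if_1\times f_{-1})=0$ \emph{is} the statement $W_2\times W_2\perp W_1\times W_1$; no scalar identity, no appeal to $G_2$-transitivity, and no non-coassociativity hypothesis are needed to produce a $\times$-compatible decomposition. Your route through $(\mathbf{v}_1\times\mathbf{v}_2)\cdot(\mathbf{v}_3\times\mathbf{v}_4)=(\mathbf{v}_1\times\mathbf{v}_3)\cdot(\mathbf{v}_2\times\mathbf{v}_4)$ introduces an extra ``delicate point'' that the paper's choice of decomposition avoids altogether.
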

\begin{proof}
The harmonic map $\varphi$ takes values in $S^6\cap W$ for some $W\subseteq \mathbb{R}^7$ and has isotropy order $r\geq 3$. Assume that $F$ is pseudo-umbilical, that is, $(f_0\times f_2)\cdot \mathbf{h}_F=0$.
 Differentiating this equation and using the properties of cross product, we obtain $(f_0\times f_3)\cdot \mathbf{h}_F=0$. Since $\mathbf{h}_F$ is in the normal
  bundle of $F$ and $f_0\times f_1$ is a section  of $TF\otimes \mathbb{C}$, we also have $(f_0\times f_1)\cdot \mathbf{h}_F=0$. Hence,
  the sections $f_0\times f_i$ and their conjugates $f_0\times f_{-i}$, with $i=1,2,3$, are all orthogonal to $\mathbf{h}_F$.
   This is equivalent to say that $\mathbf{v}=\varphi\times \mathbf{h}_F$ is orthogonal to
the vector subspace $W\otimes \mathbb{C}$, which is  generated by the meromorphic sections $f_0$, $f_i$ and $f_{-i}$, with $i=1,2,3$. Clearly, $\varphi$ is full in $S^6\cap W$.

If $\dim W =7$ (that is, $\varphi$ is full in $S^6$), then  $\mathbf{v}=\varphi\times \mathbf{h}_F$ vanishes, which
implies that $\varphi=\pm \mathbf{h}_F$; and, consequently, as we have seen in the proof of Theorem \ref{minimal},
$\varphi$ is, up to orientation, an almost complex curve, and $F$  has  parallel mean curvature vector.

If $\dim W=6$, then  $\mathbf{v}=f_0\times \mathbf{h}_F$ generates a constant space. Differentiating, we see that $f_0\times (f_2\times f_{-1})+f_0\times (f_1\times f_0)$
and $\mathbf{v}$ are complex linearly dependent. This occurs if, and only if, 
\begin{equation}\label{correc}
f_2\times f_{-1}+f_1\times f_0 =a (f_1\times f_{-1})+b f_0.
\end{equation}
Taking the inner product of both sides of this equation with $f_{-1}$, we obtain $f_0\cdot (f_{1}\times f_{-1})=0$. Then, if we take the inner product of \eqref{correc} with $f_1\times f_{-1}$, we obtain $a=0$. Since $f_2\times f_{-1}+f_1\times f_0$ is isotropic (as a consequence of $F$ be pseudo-umbilical) and $f_0$ is real, then we also have $b=0$, which implies that $f_2\times f_{-1}+f_1\times f_0=0$ -- but this is impossible since $(f_2\times f_{-1})\cdot  (f_0\times f_{-1})=0$.

%However $(f_2\times f_{-1})\cdot (f_1\times f_{-1})=0$, hence $a=0$ and $f_2\times f_{-1}=b f_0$. Since $f_2\times f_{-1}$
%is isotropic and $f_0$ is real, we have $b=0$, that is, $f_2\times f_{-1}=0$, and $F$ has parallel mean curvature vector, by Lemma \ref{f2f1}.

When $\dim W=3$, $\varphi$ is totally geodesic in $S^6$, and $F$ is minimal in $S^6$, by Theorem \ref{minimal}. So it remains to investigate the cases $\dim W=4$ and $\dim W=5$. In the second case, if $\varphi$ has finite isotropy order $r$, then, by
 Lemma \ref{r=3}, we must have $r=3$; otherwise, $\varphi$ is full and superminimal in $S^4=S^6\cap W$.

Assume now that  $\dim W=4$. In this case, $\varphi_2\neq \{0\}$ is real and $\varphi$  has isotropy
 order $r=3$; hence, by Lemma \ref{superconformal}, $\varphi$ is superconformal in $S^3=S^6\cap W$. Consider the real subspaces $W_1$ and $W_2$ defined by  $W_1\otimes \C=\varphi_{-1}\oplus \varphi_1$ and $W_2\otimes \C=\varphi\oplus \varphi_2$. Since $F$ is pseudo-umbilical, we have  $(f_0\times f_2)\cdot \mathbf{h}_F=0$; consequently, $W_1\times W_1\perp W_2\times W_2$, that is, the decomposition $W=W_1\oplus W_2$ is $\times$-compatible. Conversely, suppose that $\varphi$ is a superconformal harmonic map in $S^3=S^6\cap W$ and that $W=\varphi\oplus \varphi_{-1}\oplus\varphi_1\oplus\varphi_2 $ admits a $\times$-compatible decomposition. Then, by Lemma \ref{all}, the decomposition $W=W_1\oplus W_2$, with $W_1\otimes\C=\varphi_{-1}\oplus \varphi_1$ and $W_2\otimes\C=\varphi\oplus \varphi_2$, is $\times$-compatible at each point of $\Sigma$, which implies that $F$ is pseudo-umbilical: $(f_0\times f_2)\cdot \mathbf{h}_F=0$. Finally, observe that $W$ cannot  be a coassociative $4$-plane because it admits $\times$-compatible decompositions. Hence, by  Theorem \ref{pmcv},  $F$ has non-parallel mean curvature vector field.
\end{proof}

\begin{eg}
Let $W=\mathrm{span}\{\mathbf{e}_1,\mathbf{e}_2,\mathbf{e}_3,\mathbf{e}_4\}$ and $\varphi:\mathbb{C}\to S^3=S^6\cap W$ be the the Clifford torus
$
\varphi(x,y)=\frac{1}{\sqrt{2}}\big(\cos x \,\mathbf{e}_1+\sin x\,\mathbf{e}_2+\cos y\,\mathbf{e}_3+\sin y \,\mathbf{e}_4 \big).$ The corresponding immersion $F:\mathbb{C}\to \mathbb{R}^7$ is given by
$$F(x,y)=\frac12(\cos x\sin y\,\mathbf{e}_1+\sin x\sin y \,\mathbf{e}_2-y\,\mathbf{e_3}+\sin x\cos y\,\mathbf{e}_5-\cos x\cos y\,\mathbf{e}_6+x\,\mathbf{e}_7).$$
Since $W$ admits $\times$-compatible decompositions and $\varphi$ is superconformal in $S^3=S^6\cap W$, the immersion $F$ is pseudo-umbilical with non-parallel mean curvature vector field, by Theorem \ref{psnp}.
\end{eg}

Examples and the existence of pseudo-umbilical surfaces $F$ arising from full harmonic maps $\varphi$  in $S^4=S^6\cap W$, with $W$ a $5$-dimensional subspace of $\mathbb{R}^7$, seem not so easy  to establish.

\subsection{$F$ is isotropic}
An isometric immersion is \emph{isotropic} provided that, at each point, all its normal curvature vectors have the same length \cite{BO}.
\begin{thm}\label{isotropic}Let $\varphi:\Sigma\to S^6$ be a conformal harmonic immersion.
$F$ is isotropic if, and only if, $F$ is pseudo-umbilical and $\varphi_2$ is isotropic (either $\varphi_2=\{0\}$ or $\varphi_2$ is an isotropic line bundle). In particular, if $F$ is isotropic, then, up to change of orientation of $\Sigma$, one of the following statements holds: \begin{enumerate} \item[1)] $\varphi$ is an almost complex curve;\item[2)]  $\varphi=R_\mathbf{v}(\beta)\tilde\varphi$ for some almost complex curve $\tilde{\varphi}$ of type (III) and some rotation $R_\mathbf{v}(\beta)$ of the form \eqref{rotation}; \item[3)] $\varphi$ is totally geodesic;
 \item[4)] $\varphi$ is superminimal in $S^4=S^6\cap W$,   for some   $5$-dimensional vector subspace $W$  of $\mathbb{R}^7$.\end{enumerate}
 \end{thm}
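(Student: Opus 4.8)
The plan is to reduce the isotropy of $F$ to two conditions on the harmonic sequence of $\varphi$ --- that $F$ be pseudo-umbilical and that the line bundle $\varphi_2$ be isotropic --- and then to read off the list of possibilities directly from Theorems \ref{minimal} and \ref{psnp}.

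For the reduction, I would start from the fact that an isometric immersion of a surface is isotropic exactly when, at each point, its ellipse of curvature lies on a round sphere of the normal space centered at the origin. By \eqref{huu} we may write $\mathcal{E}_F=\{\mathbf{h}_F+\cos 2\theta\,\mathbf{B}+\sin 2\theta\,\mathbf{C}:\theta\in\R\}$ with $\mathbf{B}=e^{-2\alpha}P^\perp_{TF}(\varphi\times\varphi_{xy})$ and $\mathbf{C}=e^{-2\alpha}P^\perp_{TF}(\varphi\times\varphi_{xx})$; expanding $|\mathbf{h}_F+\cos 2\theta\,\mathbf{B}+\sin 2\theta\,\mathbf{C}|^2$ one sees that it is independent of $\theta$ if and only if $\mathbf{h}_F\cdot\mathbf{B}=\mathbf{h}_F\cdot\mathbf{C}=0$ and, simultaneously, $|\mathbf{B}|=|\mathbf{C}|$ and $\mathbf{B}\cdot\mathbf{C}=0$. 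The key step is then to identify $f_0\times f_2$ with $\tfrac12e^{2\alpha}(\mathbf{C}-i\mathbf{B})$: since $\varphi\times\varphi_{xx}=-\varphi\times\varphi_{yy}$ by harmonicity one gets $f_0\times f_{0,zz}=\tfrac12(\varphi\times\varphi_{xx}-i\,\varphi\times\varphi_{xy})$, the difference $f_{0,zz}-f_2$ lies in $\varphi_1$ so that $f_0\times(f_{0,zz}-f_2)$ is a section of $TF\otimes\C$, and --- by the polarized form of (P2) together with $f_0\cdot f_2=0$, $f_1\cdot f_2=0$, $f_{-1}\cdot f_2=0$ --- the section $f_0\times f_2$ is orthogonal to all of $TF\otimes\C$; hence $f_0\times f_2=P^\perp_{TF}(f_0\times f_{0,zz})=\tfrac12e^{2\alpha}(\mathbf{C}-i\mathbf{B})$. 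With this, $\mathbf{h}_F\cdot\mathbf{B}=\mathbf{h}_F\cdot\mathbf{C}=0$ becomes $(f_0\times f_2)\cdot\mathbf{h}_F=0$, which is pseudo-umbilicity by Lemma \ref{psc}, while $|\mathbf{B}|=|\mathbf{C}|$ and $\mathbf{B}\cdot\mathbf{C}=0$ becomes $(f_0\times f_2)\cdot(f_0\times f_2)=0$; and by (P2) the latter equals $f_2\cdot f_2=0$ (using $f_0\cdot f_0=1$ and $f_0\cdot f_2=0$), which by Lemma \ref{KCCE} is precisely the isotropy of $\varphi_2$. This gives the stated equivalence.

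For the list of possibilities, suppose $F$ is isotropic, hence pseudo-umbilical. If $\mathbf{h}_F$ is parallel in $TF^\perp$ then, since $\mathbf{h}_F\neq 0$, \cite{YC} makes $F$ minimal in a hypersphere of $\R^7$, and Theorem \ref{minimal} puts $\varphi$, up to change of orientation, in classes 1), 2) or 3). If $\mathbf{h}_F$ is not parallel, then Theorem \ref{psnp} leaves three options for $\varphi$: full and superminimal in $S^4=S^6\cap W$ with $\dim W=5$, which is class 4); full with finite isotropy order $r=3$ in some $S^4=S^6\cap W$; or superconformal in some $S^3=S^6\cap W$ with $W$ admitting a $\times$-compatible decomposition. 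The last two options must be discarded because the isotropy of $\varphi_2$ forbids them: in both, $\varphi$ has finite isotropy order $r=3$, and since every harmonic map in the harmonic sequence of $\varphi$ shares its isotropy order, applying the definition of isotropy order to $\varphi_{-2}$ gives $\varphi_{-2}\not\perp_h(\varphi_{-2})_4=\varphi_2$; as $\varphi_{-2}=\overline{\varphi}_2$, this means $f_2\cdot f_2\neq 0$, contradicting $f_2\cdot f_2=0$. Only classes 1)--4) remain.

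I expect the hard part to be the identification $f_0\times f_2=\tfrac12e^{2\alpha}(\mathbf{C}-i\mathbf{B})$, and more precisely the verification that $f_0\times f_2$ is orthogonal to the whole complexified tangent bundle $TF\otimes\C$: this is the bookkeeping that ties the degeneracy behaviour of the ellipse $\mathcal{E}_F$ to the algebra of the harmonic sequence. All the orthogonality relations it needs are already recorded above, so it should go through. By contrast, the exclusion of the two spurious classes of Theorem \ref{psnp} is immediate, being nothing more than the observation that finite isotropy order $r=3$ is incompatible with $f_2$ being isotropic.
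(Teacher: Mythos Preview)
Your proposal is correct and follows essentially the same route as the paper: both split the isotropy condition on $\mathcal{E}_F$ into the pseudo-umbilical condition $(f_0\times f_2)\cdot\mathbf{h}_F=0$ and the isotropy of $f_0\times f_2$, reduce the latter to $f_2\cdot f_2=0$ via (P2), and then invoke Theorems~\ref{minimal} and~\ref{psnp} together with the observation that finite isotropy order $r=3$ forces $f_2\cdot f_2\neq 0$. The paper phrases the key identification as $P^\perp_{TF}(\varphi\times\varphi_{zz})=\varphi\times P^\perp_{T\varphi}\varphi_{zz}$ without spelling out the orthogonality checks, whereas you verify $f_0\times f_2\perp TF\otimes\C$ explicitly via the polarized (P2); this is the same computation made more transparent.
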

\begin{proof}
  From \eqref{huu} we see that $F$ is isotropic if, and only if, the following holds at each point:
\begin{align}\label{Fumb}
\mathbf{h}_F\cdot P^\perp_{TF}(\varphi\times \varphi_{xy})=&\mathbf{h}_F\cdot P^\perp_{TF}(\varphi\times \varphi_{xx})=0;\\
  |P^\perp_{TF}(\varphi\times \varphi_{xy})|=|P^\perp_{TF}(\varphi\times \varphi_{xx})|;& \,\,  P^\perp_{TF}(\varphi\times \varphi_{xy})\cdot P^\perp_{TF}(\varphi\times \varphi_{xx})=0.\label{Fiso}
\end{align}
Observe that \eqref{Fumb} means that $F$ is pseudo-umbilical. On the other hand,  \eqref{Fiso} means that $P^\perp_{TF}(\varphi\times \varphi_{zz})=\varphi\times P^\perp_{T\varphi}\varphi_{zz}$ is isotropic, that is,
\begin{equation}\label{aux}
0=\big(\varphi\times P^\perp_{T\varphi}\varphi_{zz}\big)\cdot \big(\varphi\times P^\perp_{T\varphi}\varphi_{zz}\big)= P^\perp_{T\varphi}\varphi_{zz}\cdot P^\perp_{T\varphi}\varphi_{zz}-\big(\varphi\cdot P^\perp_{T\varphi}\varphi_{zz}\big)^2.\end{equation}
Since $P^\perp_{T\varphi}\varphi_{zz}$ is a meromorphic section of $\varphi_2$ and $\varphi$ is conformal (in particular, $\varphi\cdot \varphi_2=0$), we see from \eqref{aux} that
\eqref{Fiso} is equivalent to the isotropy of $\varphi_2$.

We know \cite{BVW} that  the ellipse of curvature of an almost complex curve is either a point or a circle, which implies, by Lemma \ref{KCCE}, that either $\varphi_2=\{0\}$ or $\varphi_2$ is an isotropic line bundle. Observe also that if the isotropy order of $\varphi$ is $r=3$, then $\varphi_2$ cannot  be isotropic. Hence, the statement follows now from
Theorem \ref{minimal} and Theorem \ref{psnp}.
\end{proof}

\section{The parallel surfaces}
Given any harmonic map $\varphi:\Sigma\to S^6$, we define also the maps $F^+,F^-:\Sigma\to \mathbb{R}^7$ by $F^\pm=F\pm \varphi$.
We have
\begin{equation}\label{Ipm}
|F^\pm_x|^2=|F^\pm_y|^2=|\varphi_x|^2+|\varphi_y|^2\mp 2\varphi\cdot (\varphi_x\times \varphi_y),\quad F^\pm_x\cdot F^\pm_y=0.
\end{equation}
%In this section we will start by characterizing those totally real minimal immersions $\varphi:\Sigma\to S^6$ producing surfaces $F^+$ and $F^-$ with parallel mean curvature. Next we will prove that, if $\varphi$ is a non-conformal  harmonic immersion such that $V_\varphi:=\varphi\oplus T\varphi$ is an associative $3$-plane at each point, then $V_\varphi$ is a constant bundle, that is, $\varphi$ takes values in the totally geodesic $2$-dimensional sphere  $S^2=S^6\cap V$, with $V=V_\varphi$; and, consequently, $F^+$ and $F^-$ are constant mean curvature surfaces in $V$.

\subsection{The conformal case}
If $\varphi:\Sigma\to S^6$  is an almost complex curve,  then $F=-\varphi$ up to translation, and  $F^+$ is just a constant map. In such case, $V_\varphi:=\varphi\oplus T\varphi$ is a bundle of associative $3$-planes. Otherwise, we have the following.
\begin{lem}
  Let $\varphi:\Sigma\to S^6$ be a conformal harmonic map such that $V_\varphi$ is everywhere non-associative. Then $F^\pm$ is a conformal immersion with mean curvature vector field given by
\begin{equation}\label{hpm}
  \mathbf{h}^{\pm}=e^{-2\omega^\pm}\big(\varphi_x\times \varphi_y\pm  \frac{\triangle \varphi}{2}\big),
\end{equation}
where  $e^{2\omega^\pm}=2(|\varphi_x|^2\mp \varphi\cdot \big(\varphi_x\times \varphi_y)\big)$ is the conformal factor of $F^\pm$.
\end{lem}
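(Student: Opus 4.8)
The plan is to work in a local conformal coordinate $z=x+iy$ and to exploit the expressions $F_x=\varphi\times\varphi_y$, $F_y=-\varphi\times\varphi_x$ recorded in the proof of Lemma \ref{vv}, together with \eqref{Ipm}. Since $\varphi$ is conformal we have $|\varphi_x|^2=|\varphi_y|^2$ and $\varphi_x\cdot\varphi_y=0$, so \eqref{Ipm} gives at once $|F^\pm_x|^2=|F^\pm_y|^2=2\big(|\varphi_x|^2\mp\varphi\cdot(\varphi_x\times\varphi_y)\big)$ and $F^\pm_x\cdot F^\pm_y=0$; hence $F^\pm$ is a conformal map with the asserted conformal factor $e^{2\omega^\pm}$. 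The step that actually uses the hypothesis is showing that $F^\pm$ is an \emph{immersion}, i.e.\ $e^{2\omega^\pm}>0$ everywhere. Combining $\mathbf{h}_F=e^{-2\alpha}\varphi_x\times\varphi_y$ (with $e^{2\alpha}=|\varphi_x|^2$), the identity $\mathbf{h}_F=if_1\times f_{-1}$ of \eqref{mean}, and the definition \eqref{kangle} of the K\"{a}hler angle, one finds $\varphi\cdot(\varphi_x\times\varphi_y)=e^{2\alpha}\,i\,(f_1\times f_{-1})\cdot f_0=|\varphi_x|^2\cos\theta$, so that $e^{2\omega^\pm}=2|\varphi_x|^2(1\mp\cos\theta)\ge 0$, vanishing exactly where $\theta\in\{0,\pi\}$. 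But at such a point the orthonormal frame $\varphi,\ \varphi_x/|\varphi_x|,\ \varphi_y/|\varphi_y|$ of $V_\varphi$ satisfies $\varphi\cdot\big(\tfrac{\varphi_x}{|\varphi_x|}\times\tfrac{\varphi_y}{|\varphi_y|}\big)=\cos\theta=\pm1$, which means $V_\varphi$ is an associative $3$-plane there; this is excluded by assumption, so $e^{2\omega^\pm}>0$ on all of $\Sigma$ and $F^\pm$ is a conformal immersion.

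For the mean curvature vector I would use the conformal-coordinate formula $\mathbf{h}^\pm=\tfrac12 e^{-2\omega^\pm}P^{\perp}_{TF^\pm}(\triangle F^\pm)$. Differentiating $F_x=\varphi\times\varphi_y$ and $F_y=-\varphi\times\varphi_x$ gives $F_{xx}=\varphi_x\times\varphi_y+\varphi\times\varphi_{xy}$ and $F_{yy}=\varphi_x\times\varphi_y-\varphi\times\varphi_{xy}$, hence $\triangle F=2\varphi_x\times\varphi_y$ and $\triangle F^\pm=2\varphi_x\times\varphi_y\pm\triangle\varphi$, so that $\mathbf{h}^\pm=e^{-2\omega^\pm}P^{\perp}_{TF^\pm}\!\big(\varphi_x\times\varphi_y\pm\tfrac12\triangle\varphi\big)$. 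It then remains to observe that the projection is superfluous: since $F^\pm_x=F_x\pm\varphi_x$ and $F^\pm_y=F_y\pm\varphi_y$ we have $TF^\pm\subseteq TF+T\varphi$, and $\varphi_x\times\varphi_y$ is orthogonal to $TF$ by Lemma \ref{vv} and to $T\varphi$ by (P1), while $\varphi$ is orthogonal to $TF$ by (P1) (recall $F_x=\varphi\times\varphi_y$, $F_y=-\varphi\times\varphi_x$) and to $T\varphi$ because $|\varphi|\equiv 1$. As $\triangle\varphi$ is collinear with $\varphi$ by harmonicity, the vector $\varphi_x\times\varphi_y\pm\tfrac12\triangle\varphi$ lies in the normal bundle of $F^\pm$, and \eqref{hpm} follows.

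The genuinely delicate step is the immersion claim, more precisely the identification of the zero locus of $e^{2\omega^\pm}$ with the set where $V_\varphi$ is associative; everything else is bookkeeping with (P1)--(P7) and with formulas already available in this section. If one prefers to bypass the calibration-form computation, an alternative is to note that $e^{2\omega^+}=0$ forces equality in $|\varphi\cdot(\varphi_x\times\varphi_y)|\le|\varphi|\,|\varphi_x\times\varphi_y|=|\varphi_x|^2$ (using (P2) and conformality), hence $\varphi_x\times\varphi_y=|\varphi_x|^2\varphi$; feeding this into (P6)--(P7) then shows that $V_\varphi$ is closed under $\times$, i.e.\ associative, again contradicting the hypothesis.
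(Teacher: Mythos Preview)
Your proof is correct and essentially follows the paper's approach. The paper's argument for the immersion claim is precisely your ``alternative'' Cauchy--Schwarz route: it uses $|\varphi\cdot(\varphi_x\times\varphi_y)|\le|\varphi_x|^2$ with equality iff $\varphi$ is collinear with $\varphi_x\times\varphi_y$, iff $V_\varphi$ is associative; your primary K\"ahler-angle computation is just a repackaging of the same inequality via \eqref{kangle} and \eqref{mean}. For the mean curvature formula the paper says only that it ``can be deduced by straightforward computations'', whereas you actually carry these out (computing $\triangle F^\pm$ and checking that $\varphi_x\times\varphi_y\pm\tfrac12\triangle\varphi$ is already normal), so your write-up is more detailed but not different in substance.
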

\begin{proof}
  If $\varphi$ is conformal, we have $|\varphi\cdot(\varphi_x\times \varphi_y)|\leq |\varphi_x|^2$, where the equality holds if, and only if, $\varphi$ is collinear with $\varphi_x\times \varphi_y$, that is, if, and only if, $V_\varphi$ is associative at that point. Hence, assuming that  $\varphi$ is everywhere non-associative, we see from \eqref{Ipm} that
  $$|F^\pm_x|^2=|F^\pm_y|^2\geq 2(|\varphi_x|^2- |\varphi\cdot (\varphi_x\times \varphi_y)|)>0,$$
  which means that $F^\pm$ is a conformal immersion. Formula \eqref{hpm} can be deduced by straightforward computations.
\end{proof}
In terms of the meromorphic sections $\{f_j\}$ satisfying \eqref{harmonicsequence1}, the mean curvature vector field $\mathbf{h}^{\pm}$ is given by
\begin{equation}\label{hpmhs}
 \mathbf{h}^{\pm}= \frac{if_1\times f_{-1}\mp f_0}{2\mp 2if_0\cdot(f_1\times f_{-1})},
\end{equation}
assuming that $\varphi$ is conformal.
From this we see that  $|\mathbf{h}^{\pm}|$ is constant if $f_0\cdot(f_1\times f_{-1})$ is constant, that is, if $\varphi$ has constant K\"{a}hler angle. Apart from almost complex curves, {totally real} minimal surfaces are the most investigated  minimal surfaces with constant  K\"{a}hler angle  \cite{BVW2}.  Next we will identify those totally real minimal surfaces $\varphi$ in $S^6$ producing immersions $F^\pm$ with parallel mean curvature.

\begin{thm}\label{toreal}
  Let $\varphi:\Sigma\to S^6$ be a totally real minimal immersion. Then the conformal immersions $F^+=F+ \varphi$ and $F^-=F- \varphi$  have parallel mean curvature vector field if, and only if, up to change of orientation of $\Sigma$, either $\varphi=R_\mathbf{v}(\beta)\tilde\varphi$, for some almost complex curve $\tilde{\varphi}$ of type (III) and some rotation $R_\mathbf{v}(\beta)$ of the form \eqref{rotation}, or $\varphi$ is totally geodesic.
\end{thm}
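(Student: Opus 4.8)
The plan is to show that, for a totally real minimal immersion $\varphi:\Sigma\to S^6$, the surfaces $F^+$ and $F^-$ both have parallel mean curvature vector if and only if $f_2\times f_{-1}=0$; then Lemma \ref{f2f1} together with Theorem \ref{minimal} finishes the proof, since ``totally real'' means the K\"ahler angle is $\theta=\pi/2\neq 0$ and so case~1 of Theorem \ref{minimal} (the almost complex curve) is excluded. The first step is to record what $\theta=\pi/2$ gives: by \eqref{kangle} it means $f_0\cdot(f_1\times f_{-1})=0$, so \eqref{hpmhs} collapses to $\mathbf{h}^{\pm}=\tfrac12\bigl(if_1\times f_{-1}\mp f_0\bigr)$, and the conformal factors $e^{2\omega^\pm}$ of $F^\pm$ equal that of $\varphi$ (equivalently $V_\varphi$ is nowhere associative, because at a point where it is associative one has $\theta=0$); hence both $F^\pm$ are genuine conformal immersions and $TF^\pm\otimes\C=\mathrm{span}\{F^\pm_z,F^\pm_{\bar z}\}$. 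Using $F_z=if_0\times f_1$ and $\overline{f_1}=-|f_1|^2f_{-1}$ one then gets $F^\pm_z=if_0\times f_1\pm f_1$, hence $TF^\pm\otimes\C=\mathrm{span}\{\,if_0\times f_1\pm f_1,\ if_0\times f_{-1}\mp f_{-1}\,\}$.

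Next I would differentiate $\mathbf{h}^\pm$ in $z$. From \eqref{harmonicsequence1}, the condition $|f_0|^2\equiv 1$ kills the logarithmic term in $\partial_z f_0=f_1$, and $|f_1|^2|f_{-1}|^2\equiv 1$ makes the two logarithmic terms in $\partial_z(f_1\times f_{-1})$ cancel, leaving $\partial_z(f_1\times f_{-1})=f_2\times f_{-1}-f_0\times f_1$. This yields the compact identity
\[
2\,\frac{\partial\mathbf{h}^\pm}{\partial z}=i\,(f_2\times f_{-1})-F^\pm_z .
\]
Since $F^\pm_z$ is tangent to $F^\pm$, the vector $\mathbf{h}^\pm$ is parallel in $TF^{\pm\perp}$ if and only if $f_2\times f_{-1}$ is a section of $TF^\pm\otimes\C$. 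So the theorem reduces to the assertion: $f_2\times f_{-1}$ belongs to both $TF^{+}\otimes\C$ and $TF^{-}\otimes\C$ if and only if $f_2\times f_{-1}=0$.

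The core of the argument is then purely linear-algebraic. Under $\theta=\pi/2$ the four vectors $f_0\times f_1,\ f_0\times f_{-1},\ f_1,\ f_{-1}$ are linearly independent at every (regular) point: using $f_0\cdot f_0=1$, $f_0\cdot f_{\pm1}=0$, $f_{\pm1}\cdot f_{\pm1}=0$, $f_1\cdot f_{-1}=-1$, property (P1), and --- crucially --- $(f_0\times f_1)\cdot f_{-1}=f_0\cdot(f_1\times f_{-1})=0$ by (P4) and total reality, their Gram matrix with respect to the $\C$-bilinear extension of $\cdot$ is block-diagonal with two blocks $\bigl(\begin{smallmatrix}0&-1\\-1&0\end{smallmatrix}\bigr)$, hence invertible. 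Consequently $TF^{+}\otimes\C$ and $TF^{-}\otimes\C$ are two distinct $2$-planes inside the $4$-dimensional span of $\{if_0\times f_1,\,f_1,\,if_0\times f_{-1},\,f_{-1}\}$, for which $\{if_0\times f_1+f_1,\ if_0\times f_{-1}-f_{-1},\ if_0\times f_1-f_1,\ if_0\times f_{-1}+f_{-1}\}$ is a basis; therefore $(TF^{+}\otimes\C)\cap(TF^{-}\otimes\C)=\{0\}$, and $f_2\times f_{-1}$ lying in both planes forces $f_2\times f_{-1}=0$. This proves the forward implication, via Lemma \ref{f2f1} and Theorem \ref{minimal} (with $\theta=\pi/2$ excluding case~1, so we land in case~2 or case~3). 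For the converse, if $\varphi$ is totally geodesic (so $f_2=0$) or $\varphi=R_\mathbf{v}(\beta)\tilde\varphi$ with $\tilde\varphi$ an almost complex curve of type (III), then $f_2\times f_{-1}=0$ exactly as argued in the proof of Theorem \ref{minimal}, whence $2\,\partial\mathbf{h}^\pm/\partial z=-F^\pm_z$ and both $\mathbf{h}^\pm$ are parallel.

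The step I expect to be the real obstacle is the everywhere linear independence of $\{f_0\times f_1,f_0\times f_{-1},f_1,f_{-1}\}$: this transversality of $TF^{+}\otimes\C$ and $TF^{-}\otimes\C$ is precisely what lets the two one-sided parallelism conditions combine into $f_2\times f_{-1}=0$, and it uses total reality essentially --- for a general constant K\"ahler angle the vectors $f_0\times f_1$, $f_0\times f_{-1}$ acquire nonzero components along $f_1,f_{-1}$, the off-diagonal block of the Gram matrix no longer vanishes, and the two tangent planes need not meet only at the origin. The derivative computation and the appeal to Lemma \ref{f2f1} and Theorem \ref{minimal} are routine once that independence is in hand.
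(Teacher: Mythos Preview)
Your argument is correct, but it follows a genuinely different route from the paper's. You and the paper agree on the reduction $2\,\partial_z\mathbf{h}^\pm=i(f_2\times f_{-1})-F^\pm_z$ and on the description $TF^\pm\otimes\C=\mathrm{span}\{f_1\pm if_0\times f_1,\ f_{-1}\mp if_0\times f_{-1}\}$. From there, however, the paper expands $f_2$ in the unitary moving frame of \cite{BVW2} associated to a totally real immersion, writes $f_2\times f_{-1}=-a_3if_{-1}+a_4if_0\times f_{-1}$, and then invokes the structural constraint $\bar a_3a_4-a_3\bar a_4=0$ (Theorem~4.2 of \cite{BVW2}) to force $a_3=a_4=0$ from a \emph{single} parallelism condition. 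Your approach instead exploits both conditions simultaneously: the Gram-matrix computation (which is correct; one finds $(f_0\times f_1)\cdot(f_0\times f_{-1})=-1$ and the totally real hypothesis kills the off-diagonal block) shows $TF^{+}\otimes\C$ and $TF^{-}\otimes\C$ are transversal $2$-planes in a common $4$-space, so $f_2\times f_{-1}\in(TF^{+}\otimes\C)\cap(TF^{-}\otimes\C)=\{0\}$.

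What each approach buys: yours is entirely self-contained---no appeal to the moving frame or to the external result from \cite{BVW2}---and the transversality idea is clean. The paper's route, on the other hand, actually establishes the stronger statement that parallelism of \emph{either one} of $\mathbf{h}^+$ or $\mathbf{h}^-$ already forces $f_2\times f_{-1}=0$; your transversality argument genuinely needs both, which is all the theorem asks for but is worth noting. One harmless slip: the conformal factors of $F^\pm$ are \emph{twice} that of $\varphi$ (from \eqref{Ipm} with $\varphi\cdot(\varphi_x\times\varphi_y)=0$), not equal to it; this does not affect anything downstream.
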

\begin{proof}Recall that $\varphi$ is totally real if, and only if,
\begin{equation}\label{tr} f_0\cdot(f_1\times f_{-1})=0\end{equation} (the K\"{a}hler angle is $\frac{\pi}{2}$). Consequently, in view of \eqref{hpmhs}, we have $|\mathbf{h}^{\pm}|=\frac{\sqrt{2}}{2}$ and
\begin{equation}\label{hpmtr}
  \mathbf{h}^{\pm}=\frac{i f_1\times f_{-1}\mp f_0}{2}.
\end{equation}
Differentiating \eqref{hpmtr}, we obtain
\begin{equation}\label{deriveh}
 \mathbf{h}^{\pm}_z=\frac{i}{2}f_2\times f_{-1}\mp\big( \frac{f_1}{2}\pm  \frac{i}{2} f_0\times f_1 \big).
\end{equation}
Since the tangent bundle $T^\mathbb{C} F^\pm$ is spanned by the isotropic sections $s_\pm=f_1\pm if_0\times f_1$ and $\bar s_\pm=f_{-1}\mp if_0\times f_{-1}$,   $\mathbf{h}^{\pm}$ is parallel in the normal bundle if, and only if,
$f_2\times f_{-1}=a s_\pm+ b \bar s_\pm$
for some complex functions $a,b$ on $\Sigma$.
From the properties of $\times$ and the properties of the harmonic sequence, we obtain $(f_2\times f_{-1})\cdot  \bar s_\pm=0$. Hence $a=0$ and   \begin{equation}\label{a1}f_2\times f_{-1}=b(f_{-1}\mp if_0\times f_{-1}).\end{equation}

 On the other hand, since $\varphi$ is totally real, the sections
$$f_0,\,\,\, \frac{f_1}{|f_1|},\,\,|f_{1}|f_{-1},\,\,\,f_0\times \frac{f_1}{|f_1|},\,\,\,|f_1| f_0\times  f_{-1},\,\,\,i f_1\times  f_{-1},\,\,\, i f_0\times(f_1\times  f_{-1})$$
form a  moving unitary frame along $\varphi$ (see \cite{BVW2}, pg. 629). Using this, we can write
$$f_2=a_1f_0\times \frac{f_1}{|f_1|}+   a_2 |f_1|f_0\times f_{-1}+a_3 i f_1\times f_{-1}+a_4i f_0\times(f_1\times  f_{-1}),$$
where $a_1,a_2,a_3,a_4$ are complex functions on $\Sigma$.
 By Theorem 4.2 of \cite{BVW2}, these complex functions satisfy $\bar a_3a_4-a_3\bar a_4=0$.
Differentiating \eqref{tr} and applying  property (P4), we see that $f_2\cdot(f_0\times f_{-1})=0$, hence $a_1=0$. We also have
$$ (f_0\times f_{-1})\times f_{-1}=0,\,\,\,  (f_1\times f_{-1})\times f_{-1}=-f_{-1},\,\,\, \big(f_0\times (f_1\times f_{-1})\big)\times  f_{-1}=f_0\times f_{-1}.$$                        Then \begin{equation}\label{a2} f_2\times f_{-1}=-a_3if_{-1}+a_4i f_0\times f_{-1}.\end{equation}

Now, assume that $\mathbf{h}^{\pm}$ is parallel. Equating \eqref{a1} and \eqref{a2},
\begin{equation*}\label{neva}
-a_3if_{-1}+a_4i f_0\times f_{-1}=b(f_{-1}\mp if_0\times f_{-1}),\end{equation*}
we see that $b=\mp a_4=-a_3i$. In particular, we cannot  have $\bar a_3a_4-a_3\bar a_4=0$, unless $a_3=a_4=0$. In this case,
\begin{equation}\label{prem}f_2=a_2 |f_1|f_0\times f_{-1}.\end{equation} Hence $f_2\times f_{-1}=0$, which means, by Lemma \ref{f2f1}, that $F$ is minimal in a hypersphere. Taking Theorem \ref{minimal} into account, we conclude that the totally real minimal immersion $\varphi$ is either of the form $\varphi=R_\mathbf{v}(\beta)\tilde\varphi$, for some almost complex curve $\tilde{\varphi}$ of type (III) and some rotation $R_\mathbf{v}(\beta)$ of the form \eqref{rotation}, or $\varphi$ takes values in $S^2=S^6\cap V$ for some non-associative $3$-space $V$ satisfying $V\times V\subset V^\perp$.

Conversely, assume that the totally real minimal immersion $\varphi$ belong to one of these two classes of conformal immersions. Then, by Theorem \ref{minimal} and Lemma \ref{f2f1}, we certainly have $f_2\times f_{-1}=0$; and, from \eqref{deriveh}, we conclude that both $F^+$ and $F^-$ are immersions with parallel mean curvature vector.
\end{proof}
%\begin{rem}
%If $\varphi$ is a conformal harmonic map and totally geodesic, then $\varphi$ is an isotropic harmonic map with $\varphi_2=\{0\}$. This implies that, up to translation, $F^+$ and $F^-$ are immersions in some $3$-dimensional subspace of $\mathbb{R}^7$. If, besides that, $\varphi$ is totally real, then one can check that both $F^+$ and $F_-$ are totally umbilical. That is, under the conditions of Theorem \ref{toreal}, both $F^+$ and $F_-$ are open subsets of $2$-dimensional spheres.
%\end{rem}

\begin{rem}\label{rempara}
   Let $\varphi:\Sigma\to S^6$ be a totally real conformal harmonic map such that $F^\pm$ have parallel mean curvature.
   Then $f_0\cdot (f_{-1}\times f_{1})=0$ and, as we have seen in the proof of the previous theorem, we also have $f_{-1}\times f_2=0$. By using these equations, one can check that the parallel surfaces $F^+$ and $F^-$ are given, up to translation, by $F^\pm=if_{-1}\times f_1\pm f_0$ and that both surfaces are minimal in the sphere $S^6(\sqrt{2})$ of $\mathbb{R}^7$ of radius $\sqrt 2$ with center at the origin.

Let $\{F^\pm_i\}$ be the harmonic sequence of the minimal surface $F^\pm$, with $F_0^\pm= F^\pm$.
If $\varphi$ is totally geodesic, we have $\varphi_2=\{0\}$ and $F^\pm_2=\{0\}$. This means that $F^+$ and $F^-$ are totally geodesic. If $\varphi$ is $SO(7)$-congruent with some almost complex curve of type (III), then $\varphi_2$ is an isotropic line bundle and we have $ F^\pm_1=\mathrm{span}\{if_0\times f_1\pm f_1\}$,
   $F^\pm_2=\mathrm{span}\{if_0\times f_2\pm f_2\}$, and $$F^\pm_3=\mathrm{span}\{if_1\times f_2+if_0\times f_3\pm f_3\}.$$
 $ F^\pm_1$ and $ F^\pm_2$ are isotropic line bundles. On the other hand, differentiating \eqref{prem} we see that $f_3$ is collinear with $f_{-1}\times f_1$, and from this it follows, taking the properties of $\times$ into account, that the line bundle  $F^\pm_3$ is not isotropic: $F^\pm_3\cdot F^\pm_3=-2a_2^2|f_1|^2 \neq 0$. Hence $F^\pm$ has finite isotropy order $r=5$.
\end{rem}
\begin{eg}
  The conformal immersion $\varphi:\Sigma\to S^6\cap \mathbf{e}_4^\perp$ of Example \ref{trex} is a totally real minimal immersion and is $SO(7)$-congruent with some almost complex curve. Then  $F^\pm=F\pm\varphi$, with $F$ given by \eqref{FVV}, has parallel mean curvature vector. Moreover, by Remark \ref{rempara}, $F^\pm$ is minimal  in $S^6(\sqrt{2})\cap \mathbf{e}_4^\perp$, with finite isotropy order $r=5$.
 \end{eg}

\subsection{The non-conformal case}
When  $\varphi:\Sigma\to S^2$ is non-conformal, it is well known that  $F^\pm=F\pm\varphi$ are conformal immersions with constant mean curvature $\frac12$.  For non-conformal harmonic maps into $S^6$ we have the following.

\begin{prop}\label{prop1}
    Assume that the harmonic map $\varphi:\Sigma\to S^6$ is everywhere non-conformal and has no branch-points ($d\varphi_p\neq 0$ for all $p\in \Sigma$).
    Then:
    \begin{enumerate}
      \item[1)] $F^+,F^-:\Sigma \to \mathbf{R}^7$ are conformal immersions;
      \item[2)] the mean curvature vectors $\mathbf{h}^{+}$ and $\mathbf{h}^{-}$ of  $F^+$ and $F^-$, respectively, are given by
      \begin{equation}\label{meanvector}\mathbf{h}^{\pm}=e^{-2\omega^\pm}\big(\varphi_x\times \varphi_y\pm  \frac{\triangle \varphi}{2}\big),\end{equation}
      where
      $e^{2\omega^+}$ and  $e^{2\omega^-}$ are the conformal factors of $F^+$ and $F^-$, respectively.
      \item[3)] the conformal immersions $F^+$ and $F^-$ have constant mean curvature along $\varphi$, with  $\mathbf{h}^{\pm}\cdot \varphi=\mp\frac12$.
      \end{enumerate}
 \end{prop}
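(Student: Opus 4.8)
The plan is to follow the same template that produced formula \eqref{hpm} in the conformal case, but now tracking carefully where conformality of $\varphi$ was actually used. First I would establish part 1). From \eqref{Ipm} we already know that $F^\pm_x\cdot F^\pm_y=0$ and $|F^\pm_x|^2=|F^\pm_y|^2$, so $F^\pm$ is automatically weakly conformal regardless of whether $\varphi$ is conformal; these identities come purely from (P1)--(P4) applied to $F_x=\varphi\times\varphi_y$, $F_y=-\varphi\times\varphi_x$ together with $\varphi\cdot\varphi_x=\varphi\cdot\varphi_y=0$. It then remains to check that the common conformal factor $e^{2\omega^\pm}=|\varphi_x|^2+|\varphi_y|^2\mp 2\varphi\cdot(\varphi_x\times\varphi_y)$ is strictly positive. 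By (P4), $\varphi\cdot(\varphi_x\times\varphi_y)=(\varphi\times\varphi_x)\cdot\varphi_y$, and by (P2), $|\varphi\times\varphi_x|^2=|\varphi|^2|\varphi_x|^2-(\varphi\cdot\varphi_x)^2=|\varphi_x|^2$; similarly $|\varphi\times\varphi_y|=|\varphi_y|$. Hence Cauchy--Schwarz gives $|\varphi\cdot(\varphi_x\times\varphi_y)|\le|\varphi_x||\varphi_y|\le\frac12(|\varphi_x|^2+|\varphi_y|^2)$, so $e^{2\omega^\pm}\ge 0$; and equality throughout would force $|\varphi_x|=|\varphi_y|$ and $\varphi\times\varphi_x$ parallel to $\varphi_y$, hence $d\varphi_p=0$ (one checks $\varphi_x\times(\varphi\times\varphi_x)=-|\varphi_x|^2\varphi$ by (P6), so proportionality of $\varphi\times\varphi_x$ and $\varphi_y$ would make $\varphi_x,\varphi_y$ collinear, and combined with $|\varphi_x|=|\varphi_y|$ and $F^\pm_x\cdot F^\pm_y=0$ one gets $d\varphi_p=0$), contradicting the no-branch-point hypothesis. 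So $e^{2\omega^\pm}>0$ everywhere and $F^\pm$ is an immersion.

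For part 2), since $F^\pm$ is conformal with factor $e^{2\omega^\pm}$, its mean curvature vector is $\mathbf{h}^\pm=\frac12 e^{-2\omega^\pm}\big(\partial_x\partial_x F^\pm+\partial_y\partial_y F^\pm\big)^{\perp}$, but because $F^\pm$ is conformal the tangential part of the Laplacian vanishes automatically, so $\mathbf{h}^\pm=\frac12 e^{-2\omega^\pm}\triangle F^\pm$. Now $\triangle F^\pm=\triangle F\pm\triangle\varphi$, and $\triangle F=(\mathrm{div}\,dF)$ with $dF=\varphi\times*d\varphi$: writing $F_x=\varphi\times\varphi_y$, $F_y=-\varphi\times\varphi_x$ and differentiating, the cross-derivative terms $\varphi\times\varphi_{xy}$ cancel and we are left with $\triangle F=\varphi_x\times\varphi_y-\varphi_y\times\varphi_x=2\,\varphi_x\times\varphi_y$. (This computation never used conformality of $\varphi$; it only uses bilinearity and (P3).) Therefore $\triangle F^\pm=2\varphi_x\times\varphi_y\pm\triangle\varphi$ and $\mathbf{h}^\pm=e^{-2\omega^\pm}\big(\varphi_x\times\varphi_y\pm\frac{\triangle\varphi}{2}\big)$, which is \eqref{meanvector}.

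For part 3), I would compute $\mathbf{h}^\pm\cdot\varphi$ directly from \eqref{meanvector}. Since $\varphi$ is harmonic into $S^6$, $\triangle\varphi$ is normal to $T_\varphi S^6$, i.e. collinear with $\varphi$; differentiating $|\varphi|^2=1$ twice gives $\varphi\cdot\triangle\varphi=-(|\varphi_x|^2+|\varphi_y|^2)$. Also $\varphi\cdot(\varphi_x\times\varphi_y)$ is what it is. Thus
\begin{equation*}
\mathbf{h}^\pm\cdot\varphi=e^{-2\omega^\pm}\Big(\varphi\cdot(\varphi_x\times\varphi_y)\mp\tfrac12(|\varphi_x|^2+|\varphi_y|^2)\Big)=\mp\tfrac12\,e^{-2\omega^\pm}\Big(|\varphi_x|^2+|\varphi_y|^2\mp2\varphi\cdot(\varphi_x\times\varphi_y)\Big)=\mp\tfrac12,
\end{equation*}
using the definition of $e^{2\omega^\pm}$ in the last step. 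The main obstacle I anticipate is not any single computation but the careful positivity argument in part 1): one must show that the weak conformality factor $e^{2\omega^\pm}$ is bounded below away from $0$ precisely because of the combined hypotheses (everywhere non-conformal \emph{and} no branch points), since either hypothesis alone could fail to prevent $e^{2\omega^\pm}$ from vanishing — and this is the only place in the proof where those hypotheses enter.
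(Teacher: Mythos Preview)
Your arguments for parts 2) and 3) are correct and essentially coincide with the paper's (which is even terser: for 2) it simply says the formula ``follows directly from the definitions,'' and for 3) it performs exactly your computation $\varphi\cdot\triangle\varphi=-|\varphi_x|^2-|\varphi_y|^2$ and substitutes into \eqref{meanvector}).

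In part 1), however, your equality-case analysis has a genuine gap. The claim ``proportionality of $\varphi\times\varphi_x$ and $\varphi_y$ would make $\varphi_x,\varphi_y$ collinear'' is false in seven dimensions: with $\varphi=\mathbf{e}_1$, $\varphi_x=\mathbf{e}_2$, $\varphi_y=\mathbf{e}_3$ one has $\varphi\times\varphi_x=\mathbf{e}_3=\varphi_y$, yet $\varphi_x$ and $\varphi_y$ are orthogonal, not collinear. (There is also a sign slip: (P6) gives $\varphi_x\times(\varphi\times\varphi_x)=+|\varphi_x|^2\varphi$.) Even granting collinearity, the further step ``combined with $|\varphi_x|=|\varphi_y|$ and $F^\pm_x\cdot F^\pm_y=0$ one gets $d\varphi_p=0$'' does not go through, since $F^\pm_x\cdot F^\pm_y=0$ is an identity from \eqref{Ipm} and imposes no new constraint. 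The easy fix is this: from $\varphi_y$ parallel to $\varphi\times\varphi_x$ one gets $\varphi_x\cdot\varphi_y=0$ immediately by (P1), and together with $|\varphi_x|=|\varphi_y|$ this says $\varphi$ is \emph{conformal} at that point --- so it is the non-conformality hypothesis, not the no-branch-point hypothesis, that is contradicted in the equality case. The paper handles part 1) via the single bound $|F^\pm_x|^2\ge(|\varphi_x|-|\varphi_y|)^2$, reaching the same conclusion more quickly.
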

\begin{proof}
  From \eqref{Ipm}, we see that
 $$|F_x^\pm|^2=|F_y^\pm|^2\geq (|\varphi_x|-|\varphi_y|)^2.$$
If $\varphi$ is everywhere non-conformal and has no branch-points, then   $(|\varphi_x|-|\varphi_y|)^2>0$, and we conclude that $F^+$ and $F^-$ are conformal immersions.

Formula \eqref{meanvector} follows directly from the definitions. Finally,  differentiating twice the equality $\varphi\cdot \varphi=1$ one obtains $\triangle\varphi\cdot  \varphi=-|\varphi_x|^2-|\varphi_y|^2$, and it follows that
  $$ \big(\varphi_x\times \varphi_y\pm  \frac{\triangle \varphi}{2}\big)\cdot \varphi=\mp\frac12\big(|\varphi_x|^2+|\varphi_y|^2\mp 2 \varphi\cdot (\varphi_x\times \varphi_y)\big) =\mp \frac{e^{2\omega_\pm}}{2},$$
  and consequently $\mathbf{h}_{\pm}\cdot \varphi=\mp\frac12$.
\end{proof}

Up to change of orientation, almost complex curves are precisely those weakly conformal harmonic maps $\varphi:\Sigma\to S^6$ satisfying $\varphi_1\times\varphi_{-1}= \varphi$.
We finalize this paper by observing that their non-conformal analogous take values in a $2$-dimensional sphere.
\begin{thm}
  Let $\varphi:\Sigma\to S^6$ be a non-conformal harmonic map such that $\varphi_1\times \varphi_{-1}\subseteq\varphi$.
  Then $\varphi$ takes values in $S^6\cap V$, for some associative $3$-plane $V$. Consequently, under the additional assumptions of Proposition \ref{prop1},
the conformal immersions  $F^+$ and $F^-$ are constant mean curvature immersions in $V$.
\end{thm}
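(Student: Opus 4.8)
The plan is to show that the hypothesis forces the bundle $V_\varphi:=\varphi\oplus T\varphi$ to be a \emph{constant} associative $3$-plane $V$, so that $\varphi$ factors through $S^6\cap V$, and then to read off the statement on $F^\pm$ from Proposition \ref{prop1} and Lemma \ref{vxv}. One works throughout on the dense open subset where $\varphi$ is an immersion; on its complement (lower-dimensional, by real analyticity of harmonic maps) the conclusion is either vacuous or follows by continuity. First I would rephrase the hypothesis: as $f_1=\varphi_z$ and $f_{-1}$ is a nonzero multiple of $\varphi_{\bar z}$, the product $f_1\times f_{-1}$ is a nonzero multiple of $\varphi_x\times\varphi_y$, so $\varphi_1\times\varphi_{-1}\subseteq\varphi$ becomes $\varphi_x\times\varphi_y=\mu\,\varphi$ for a real function $\mu$, and $\mu\neq0$ by (P2). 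I then claim $T\varphi$ is invariant under the nearly K\"{a}hler structure $J\mathbf{u}=\varphi\times\mathbf{u}$: the vector $\varphi\times\varphi_x$ is orthogonal to $\varphi$ and to $\varphi_x$ by (P1) and has squared norm $|\varphi_x|^2$ by (P2), while, using $(\varphi\times\varphi_x)\cdot\varphi_y=\varphi\cdot(\varphi_x\times\varphi_y)=\mu$ and $|\varphi_x\times\varphi_y|^2=\mu^2$, its orthogonal projection onto $T\varphi$ is seen to have the same squared norm; hence $\varphi\times\varphi_x\in T\varphi$ and likewise $\varphi\times\varphi_y\in T\varphi$ (alternatively, applying (P6) and (P7) to $f_1\times f_{-1}=\lambda f_0$ and using $f_{-1}\cdot f_{-1}\neq0$ gives $f_0\times f_1\in\mathrm{span}\{f_1,f_{-1}\}$ at once). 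Together with $f_1\times f_{-1}=\lambda f_0$ and $f_j\times f_j=0$ this shows $V_\varphi\otimes\C=\varphi_0\oplus\varphi_1\oplus\varphi_{-1}$ is closed under $\times$, i.e.\ $V_\varphi$ is a pointwise associative $3$-plane; set $W_\varphi:=V_\varphi^{\perp}$.

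Next I would show that $\varphi_2$ is contained in $V_\varphi\otimes\C$. Differentiating $f_1\times f_{-1}=\lambda f_0$ in $z$ and using \eqref{harmonicsequence1} (the logarithmic terms cancel because $|f_1||f_{-1}|=1$) yields
\[
f_2\times f_{-1}=\lambda_z f_0+\lambda f_1+f_0\times f_1,
\]
whose right-hand side lies in $V_\varphi\otimes\C$ by the previous paragraph. Decompose $f_2=\eta+\xi$ with $\eta\in V_\varphi\otimes\C$ and $\xi\in W_\varphi\otimes\C$; since $V_\varphi$ is associative and $V_\varphi\times W_\varphi=W_\varphi$ by Lemma \ref{vxv}, the $W_\varphi$-component of $f_2\times f_{-1}$ equals $\xi\times f_{-1}$, which must therefore vanish. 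As $\xi\cdot f_{-1}=0$, property (P6) then gives $(f_{-1}\cdot f_{-1})\,\xi=0$, and $f_{-1}\cdot f_{-1}\neq0$ exactly because $\varphi$ is non-conformal; hence $\xi=0$, so $f_2$, and by conjugation $f_{-2}$, are sections of $V_\varphi\otimes\C$. Substituting back into \eqref{harmonicsequence1}, the subbundle $V_\varphi\otimes\C$ of $\underline{\C}^7$ is closed under $\partial_z$ and $\partial_{\bar z}$; its Hermitian orthogonal projection is then parallel, so $V_\varphi\otimes\C$ is a constant subbundle, and being invariant under complex conjugation it equals $V\otimes\C$ for a constant real associative $3$-plane $V$, with $\varphi(\Sigma)\subseteq S^6\cap V$.

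Finally, assuming the additional hypotheses of Proposition \ref{prop1}, I would argue as follows. Since $\varphi,\varphi_x,\varphi_y$ take values in the constant $V$ and $V\times V\subseteq V$, the form $\varphi\times*\mathrm{d}\varphi$ takes values in $V$; hence $F$ (choosing the constant of integration in $V$), and therefore $F^\pm=F\pm\varphi$, take values in $V\cong\mathbb{R}^3$, and by Proposition \ref{prop1} they are conformal immersions with $\mathbf{h}^{\pm}\cdot\varphi=\mp\tfrac12$. The mean curvature vector $\mathbf{h}^{\pm}=e^{-2\omega^\pm}\big(\varphi_x\times\varphi_y\pm\tfrac12\triangle\varphi\big)$ also lies in $V$, since $\varphi_x\times\varphi_y=\mu\varphi$ and $\triangle\varphi=-(|\varphi_x|^2+|\varphi_y|^2)\varphi$ are multiples of $\varphi$; moreover $\varphi\perp TF^\pm$ by (P1), and as $|\varphi|=1$ the unit vector $\varphi$ spans the line normal to $F^\pm$ inside $V$. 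Therefore $\mathbf{h}^{\pm}=(\mathbf{h}^{\pm}\cdot\varphi)\varphi=\mp\tfrac12\varphi$, so $F^+$ and $F^-$ are constant mean curvature immersions in $V$.

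I expect the main obstacle to be the step establishing that $T\varphi$ is $J$-invariant (equivalently, that $V_\varphi$ is associative) and then that $f_2$ has no component along $W_\varphi$: this is precisely where non-conformality, in the form $f_{-1}\cdot f_{-1}\neq0$, is used essentially, and it requires the cross-product identities (P2), (P6), (P7) rather than merely formal manipulation of the harmonic sequence. Once $\varphi_2\subseteq V_\varphi\otimes\C$ is in hand, the constancy of $V_\varphi$ is the standard fact that the spanning bundle of a harmonic map with finite harmonic sequence is a constant subbundle, and the passage to $F^\pm$ is routine.
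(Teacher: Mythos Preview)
Your proof is correct and follows essentially the same route as the paper's: write $f_1\times f_{-1}=\lambda f_0$, deduce that $V_\varphi=\varphi\oplus\varphi_1\oplus\varphi_{-1}$ is a bundle of associative $3$-planes, differentiate to get $f_2\times f_{-1}\in V_\varphi\otimes\C$, and then use (P6) together with $f_{-1}\cdot f_{-1}\neq 0$ (non-conformality) to force $f_2\in V_\varphi\otimes\C$, whence $V_\varphi$ is constant. Your write-up is in fact more complete than the paper's, which asserts the associativity of $V_\varphi$ without argument and leaves the passage to the $F^\pm$ statement entirely to the reader; your explicit verification that $T\varphi$ is $J$-invariant and your final paragraph showing $\mathbf{h}^\pm=\mp\tfrac12\varphi$ fill those gaps cleanly.
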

\begin{proof}
If $\varphi_1\times \varphi_{-1}\subseteq \varphi$, we have $f_1\times f_{-1}=a f_0$ for some complex function $a$ on $\Sigma$ and $V_\varphi:=\varphi\oplus\varphi_1\oplus \varphi_{-1}$
is a bundle of associative $3$-spaces. Differentiating, we get
$$f_2\times f_{-1}+f_1\times f_0=a f_1+a_zf_0.$$
Since the fibres of $V_\varphi$ are associative, $ f_0\times f_1$ is a section of $V_\varphi$. Hence   $f_2\times f_{-1}$ is  a section of $V_\varphi$, and,  by associativity,  $f_{-1}\times (f_2\times f_{-1})$ is also a section of $V_\varphi$. But
$$ f_{-1}\times (f_2\times f_{-1})=(f_{-1}\cdot f_{-1})f_2-(f_{-1}\cdot f_2)f_{-1}.$$
This implies that $P_{V_\varphi}^\perp(f_2)=0$, that is, $f_2\in V_\varphi$. Hence $V:=V_\varphi$ is a constant associative $3$-space.
\end{proof}

\end{document}